\documentclass[journal]{IEEEtran}
\usepackage{amsmath,amsthm}
\usepackage{amsfonts,amsmath,amsthm, amssymb, mathrsfs}
\usepackage{float}
\usepackage{euscript,eufrak,verbatim}
\usepackage{graphicx}
\usepackage[usenames]{color}
\usepackage[colorlinks,linkcolor=red,anchorcolor=blue,citecolor=blue]{hyperref}
\usepackage{amsmath}
\usepackage{amsthm}
\usepackage[all]{xy}
\usepackage{graphicx} 
\usepackage{amssymb} 
\usepackage{cite}
\newtheorem{thm}{Theorem}

\newtheorem{df}[thm]{Definition}
\newtheorem{lem}[thm]{Lemma}

\newtheorem{ex}[thm]{Example}

\newtheorem{rem}[thm]{Remark}

\def\N{\mathbb{N}}
\def\Z{\mathbb{Z}}
\def\N{\mathbb{N}}
\def\R{\mathbb{R}}

\def\UU{\mathcal{U}}
\def\VV{\mathcal{V}}

\def\XX{\mathcal{X}}

\newcommand{\E}{\mathbb{E}}
\def\UU{\mathcal{U}}

\def\diam{\text{\rm diam}}

\def\Leb{\text{\rm Leb}}
\def\mdim{\text{\rm mdim}}

\def\mdim{\text{\rm mdim}}
\def\over{\overline { \rm mdim}_{M}(T,\XX,d)}
\def\under{\underline{ \rm mdim}_{M}(T,\XX,d)}

\def\logf{\log\frac{1}{\epsilon}}
\makeatletter 
\@addtoreset{equation}{section}

\newcommand{\Rmnum}[1]{\expandafter\@slowromancap\romannumeral #1@}
\makeatother

\ifCLASSINFOpdf
\else
\fi
\hyphenation{op-tical net-works semi-conduc-tor}

\begin{document}

\title{Mean dimension and rate-distortion function revisited}

\author{Rui~Yang
      
\thanks{Manuscript received;  revised.}
\thanks{1.School of Mathematics, Northwest University, Xi'an, Shaanxi, 710127, P.R. China; 2. College of Mathematics and Statistics, Center of Mathematics, Chongqing University, Chongqing 401331, P.R. China (e-mail:zkyangrui2015@163.com.) }
}

\markboth{Journal of \LaTeX\ Class Files,~Vol.~14, No.~8, August~2015}%
{Shell \MakeLowercase{\textit{et al.}}: Bare Demo of IEEEtran.cls for IEEE Journals}

\maketitle

\begin{abstract}
Around the mean dimensions and  rate-distortion functions, using some tools from local entropy theory this paper establishes the following main results:

$(1)$  We prove that for non-ergodic measures associated with almost sure processes, the mean R\'enyi information dimension coincides with the information dimension rate. This answers a question posed by  Gutman and \'Spiewak (in Around the variational principle for metric mean dimension, \emph{Studia Math.} \textbf{261}(2021)  345-360).

$(2)$ We introduce three types of rate-distortion entropies and establish their relation with  Kolmogorov-Sinai entropy.

$(3)$ We show that  for systems with the marker property, if the mean dimension is finite, then   the supremum  in  Lindenstrauss-Tsukamoto's double variational principle can be taken over the set of ergodic measures. Additionally, the double variational principle holds for various other  measure-theoretic $\epsilon$-entropies.
\end{abstract}

\begin{IEEEkeywords}
Mean dimension,
rate-distortion functions, mean  R\'enyi information dimension,  variational principle.
\end{IEEEkeywords}

\IEEEpeerreviewmaketitle

\section{Introduction}

\IEEEPARstart{I}{n} this paper, a topological dynamical system (TDS for short) $(X,T)$ always means that $X$ is a compact metrizable topological space, and $T:X\rightarrow X$ is a homeomorphism. The set of compatible metrics on $X$ is denoted by $\mathcal{D}(X)$. The Borel probability space $M(X)$ is endowed with the weak$^{*}$-topology.  Let  $M(X,T)$, $E(X,T)$  denote the sets of $T$-invariant, and $T$-ergodic   Borel probability measures on $X$, respectively.

Based on the  concept of   uncertainty from information theory,  Kolmogorov-Sinai entropy (or known as measure-theoretic entropy) for   measure-preserving systems  was introduced by  Kolmogorov \cite{kol58} and  Sinai \cite{s59}. Later,  Adler,  Konheim and  McAndrew \cite{akm65} introduced its topological analogue, called topological entropy, for topological dynamical systems. These two concepts are linked by the classical variational principle:
\begin{align*}
	h_{top}(T,X)=\sup_{\mu \in M(X,T)}{h_{\mu}(T)}
	=\sup_{\mu \in E(X,T)}{h_{\mu}(T)},
\end{align*}
where $h_{top}(T,X)$ denotes the topological entropy of $(X,T)$,  and $h_{\mu}(T)$ denotes the  Kolmogorov-Sinai entropy of $\mu$. The variational principle of topological entropy allows us to invoke the techniques from ergodic theory to  study the topological dynamical systems. It plays a crucial role in dimension theory  and chaotic theory of  dynamical systems. 

Since topological entropy fails to characterize dynamics in systems with infinite topological entropy, several refined  entropy-like quantities have been introduced from a quantitative geometric perspective to describe the dynamical behavior of infinite entropy systems.   In 1999, Gromov \cite{gromov} proposed a new topological invariant called mean dimension, which has  found applications in embedding problems of dynamical systems \cite{l99,lw00,g15,glt16,g17,gt20}. To study mean dimension in the context of infinite entropy, Lindenstrauss and Weiss \cite{lw00} introduced the concept of metric mean dimension and showed that  metric mean dimension is an upper bound of mean dimension.   To establish the variational principles for mean dimensions,  Lindenstrauss  and Tsukamoto  \cite{lt18}  introduced the  $L^p$ and $L^\infty$ rate-distortion functions for invariant measures, and  injected ergodic theoretic ideas into mean dimension theory by establishing  the  variational principles for metric mean dimension, i.e.,
\begin{align*}
	\overline{\rm {mdim}}_{M}(T,X,d)=\limsup_{\epsilon \to 0}\frac{1}{\logf}\sup_{\mu \in M(X,T)}R_{\mu,L^{\infty}}(\epsilon).
\end{align*}
Furthermore, if $(X,d)$ has the tame growth of covering numbers\footnote[1]{A  metric $d$ on a compact metric space  $X $  is said to  have the \emph{tame growth of covering numbers} if 
	$\lim_{\epsilon \to 0}\epsilon^{\theta}\log r(X,d,\epsilon)=0$ for each $\theta>0$, where $r(X,d,\epsilon)$  denotes  the smallest number of open balls  $B_d(x,\epsilon)$ needed to cover $X$.
	
	 This definition does not involve the dynamics, and it is proved that every compact  metrizable  space admits a metric satisfying this  condition  \cite[p.22-p.23]{lt19}.}, then   for   $p\in[1,\infty)$, 
\begin{align*}
	\overline{\rm {mdim}}_{M}(T,X,d)=\limsup_{\epsilon \to 0}\frac{1}{\logf} \sup_{\mu \in M(X,T)}R_{\mu,L^p}(\epsilon),
\end{align*}
where  $\overline{\rm mdim}_M(T,X,d)$ denotes the upper metric mean dimension of $(X,d,T)$, and  $R_{\mu,L^p}(\epsilon),R_{\mu,L^{\infty}}(\epsilon)$ are  the $L^p$  and  $L^{\infty}$ rate-distortion functions of $\mu$, respectively.

The authors \cite{gs21,ycz23} further revealed that the  suprema can be taken  over the set of ergodic measures. In \cite{gs21}, Gutman and \'Spiewak introduced the mean  R\'enyi information dimension for invariant measures of a TDS. Furthermore, for the Hilbert cube $[0,1]^{\mathbb{Z}}$ with the left shift $\sigma$,  they proved its coincidence to the information dimension rate for ergodic measures, and posed the question of whether this equivalence holds for non-ergodic measures. We answer affirmatively  in the following theorem.

\begin{thm} \label{thm 1.1}
	Let  $([0,1]^{\mathbb{Z}}, \sigma)$ be a TDS with the metric $d^{\mathbb{Z}}$. Then for every $\mu\in M([0,1]^{\mathbb{Z}}, \sigma)$,
	\begin{align*}
		{\underline{\rm MRID}}([0,1]^{\mathbb{Z}},\sigma,d^{\mathbb{Z}},\mu)&=\underline{d}(\mu),\\
		{\overline{\rm MRID}}([0,1]^{\mathbb{Z}},\sigma,d^{\mathbb{Z}},\mu)&=\overline{d}(\mu),
	\end{align*}
	where  $d^{\mathbb{Z}}(x,y)=\sum_{n\in \mathbb{Z}}\frac{|x_n-y_n|}{2^{|n|}}$; $	{\underline{\rm MRID}}([0,1]^{\mathbb{Z}},\sigma,d^{\mathbb{Z}},\mu)$ and $	{\overline{\rm MRID}}([0,1]^{\mathbb{Z}},\sigma,d^{\mathbb{Z}},\mu)$  respectively denote the  lower and upper mean  R\'enyi information dimensions of $\mu$; $\underline{d}(\mu)$ and $\overline{d}(\mu)$  respectively denote the lower and upper  information dimension rates of $\mu$, see  Subsection  \ref{sub 3.1} for their precise definitions.
\end{thm}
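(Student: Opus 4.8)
The plan is to unfold both sides into a single two–parameter array and then reconcile the two genuine discrepancies between the two notions: that ${\rm MRID}$ takes the resolution limit $\epsilon\to0$ \emph{after} the dynamical average $n\to\infty$ while the information dimension rate takes it \emph{before}, and that ${\rm MRID}$ is built from the metric $d^{\mathbb{Z}}$ (equivalently from open covers of mesh $\epsilon$) while the rate is built from the coordinatewise grid of side $\epsilon$. Write $\pi_{[0,n)}\colon[0,1]^{\mathbb{Z}}\to[0,1]^{n}$ for the projection onto coordinates $0,\dots,n-1$, let $[\,\cdot\,]_{\epsilon}$ be the grid discretization of side $\epsilon$, and set $H_{\epsilon}(n):=H\big([\pi_{[0,n)*}\mu]_{\epsilon}\big)$ and $a(\epsilon,n):=H_{\epsilon}(n)/(n\log\frac{1}{\epsilon})$.

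First I would carry out the reduction. A short computation with $d^{\mathbb{Z}}(x,y)=\sum_{n}|x_{n}-y_{n}|2^{-|n|}$ shows that the Bowen $\epsilon$–ball $\{y:\max_{0\le i<n}d^{\mathbb{Z}}(\sigma^{i}x,\sigma^{i}y)<\epsilon\}$ pins down the coordinates $0,\dots,n-1$ to precision comparable to $\epsilon$ and, in addition, only $O(\log\frac{1}{\epsilon})$ ``boundary'' coordinates, so the associated measure–theoretic $\epsilon$–entropy differs from $H_{\epsilon}(n)$ by at most $n\cdot o(1)+O\big((\log\frac{1}{\epsilon})^{2}\big)$; the passage between $\epsilon$–balls and the grid is the standard cover–versus–partition comparison used in metric mean dimension theory, and this is one place where I expect to lean on local entropy tools. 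Dividing by $n\log\frac{1}{\epsilon}$ and letting $n\to\infty$, this yields $\overline{\rm MRID}(\mu)=\limsup_{\epsilon\to0}\lim_{n}a(\epsilon,n)$, $\underline{\rm MRID}(\mu)=\liminf_{\epsilon\to0}\lim_{n}a(\epsilon,n)$, $\overline{d}(\mu)=\inf_{n}\limsup_{\epsilon\to0}a(\epsilon,n)$ and $\underline{d}(\mu)=\inf_{n}\liminf_{\epsilon\to0}a(\epsilon,n)$, where the inner $\lim_{n}$ and the $\inf_{n}$ are justified by the key monotonicity: since $\mu$ is $\sigma$–invariant, $n\mapsto H_{\epsilon}(n)$ is increasing with nonincreasing increments $H_{\epsilon}(n)-H_{\epsilon}(n-1)=H\big([\pi_{\{0\}*}\mu]_{\epsilon}\mid[\pi_{[-(n-1),0)*}\mu]_{\epsilon}\big)$ (stationarity together with ``conditioning does not increase entropy''), so $a(\epsilon,n)$, a Ces\`aro average of the normalized increments, is nonincreasing in $n$, and therefore $\limsup_{\epsilon}a(\epsilon,n)$ and $\liminf_{\epsilon}a(\epsilon,n)$ are nonincreasing in $n$ as well.

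With this in hand, three of the four inequalities are soft. Putting $c(\epsilon):=\inf_{n}a(\epsilon,n)=\lim_{n}a(\epsilon,n)$, the bound $c(\epsilon)\le a(\epsilon,n)$ gives $\overline{\rm MRID}(\mu)\le\overline{d}(\mu)$ and $\underline{\rm MRID}(\mu)\le\underline{d}(\mu)$ after taking $\limsup_{\epsilon}$ (resp.\ $\liminf_{\epsilon}$) and then $\inf_{n}$. For the reverse of the lower identity, monotonicity of $\liminf$ in the same bound gives $\liminf_{\epsilon}a(\epsilon,n)\ge\liminf_{\epsilon}c(\epsilon)=\underline{\rm MRID}(\mu)$ for \emph{every} $n$, hence $\underline{d}(\mu)=\inf_{n}\liminf_{\epsilon}a(\epsilon,n)\ge\underline{\rm MRID}(\mu)$. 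Thus $\underline{\rm MRID}(\mu)=\underline{d}(\mu)$ holds for \emph{all} $\mu\in M([0,1]^{\mathbb{Z}},\sigma)$, using nothing beyond stationarity; the same manoeuvre fails for the upper identity precisely because $\limsup_{\epsilon}a(\epsilon,n)\ge\limsup_{\epsilon}c(\epsilon)$ points the wrong way.

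The hard part will be the remaining inequality $\overline{d}(\mu)\le\overline{\rm MRID}(\mu)$, i.e.\ $\inf_{n}\limsup_{\epsilon}a(\epsilon,n)\le\limsup_{\epsilon}\inf_{n}a(\epsilon,n)$ — it demands that, along some sequence $\epsilon_{j}\to0$, the defect $a(\epsilon_{j},n)-c(\epsilon_{j})$ be small \emph{uniformly in} $n$. For this I would pass to the ergodic decomposition $\mu=\int_{E([0,1]^{\mathbb{Z}},\sigma)}\nu\,d\tau(\nu)$: for $\tau$–a.e.\ $\nu$ one has $\overline{\rm MRID}(\nu)=\overline{d}(\nu)$ by Gutman--\'Spiewak and $\underline{\rm MRID}(\nu)=\underline{d}(\nu)$ by the previous paragraph. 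Since the conditional entropies $\mu\mapsto H\big([\pi_{\{0\}*}\mu]_{\epsilon}\mid[\pi_{[-k,0)*}\mu]_{\epsilon}\big)$ are affine in $\mu$ (a difference of two affine entropy functionals), the chain-rule identity $\overline{d}(\mu)=\inf_{n}\limsup_{\epsilon}H\big([\pi_{\{0\}*}\mu]_{\epsilon}\mid[\pi_{[-n,0)*}\mu]_{\epsilon}\big)/\log\tfrac{1}{\epsilon}$ together with reverse Fatou (favourable here, as $0\le a(\epsilon,n)\le 2$ on the Hilbert cube) and bounded convergence in $n$ gives $\overline{d}(\mu)\le\int\overline{d}(\nu)\,d\tau(\nu)$, while $\mu\mapsto h(\mu,\epsilon):=\inf_{k}H\big([\pi_{\{0\}*}\mu]_{\epsilon}\mid[\pi_{[-k,0)*}\mu]_{\epsilon}\big)$ is concave. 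The genuine obstacle — the only place where one must do more than push Fatou around — is the opposite inequality $\int\overline{\rm MRID}(\nu)\,d\tau(\nu)\le\overline{\rm MRID}(\mu)$: because $\limsup_{\epsilon}\int(\cdot)\,d\tau\ge\int\limsup_{\epsilon}(\cdot)\,d\tau$ is false in general, I would establish it by extracting, from the specific geometry of $([0,1]^{\mathbb{Z}},d^{\mathbb{Z}})$ and from a uniform-in-$\nu$ version of Gutman--\'Spiewak's estimate (supplied by the local entropy machinery), a single sequence $\epsilon_{j}\to0$ along which $a^{(\nu)}(\epsilon_{j},n)\to\overline{d}(\nu)$ for $\tau$–a.e.\ $\nu$ uniformly enough to invoke Vitali's convergence theorem, which — combined with the monotonicity in $n$ of Step~2 — yields $\int\overline{\rm MRID}(\nu)\,d\tau(\nu)\le\overline{\rm MRID}(\mu)$; then $\overline{d}(\mu)\le\int\overline{d}(\nu)\,d\tau=\int\overline{\rm MRID}(\nu)\,d\tau\le\overline{\rm MRID}(\mu)$ closes the proof. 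I expect essentially all of the difficulty, and all of the genuine use of the Hilbert–cube structure and of local entropy theory, to sit in this last uniform interchange.
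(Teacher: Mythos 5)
There is a concrete error in your reduction that undermines the whole strategy. By definition, the information dimension rate is $\overline{d}(\mu)=\limsup_{m\to\infty}h_{\mu}(\sigma,\alpha_{m})/\log m$, and $h_{\mu}(\sigma,\alpha_{m})=\lim_{n}\frac{1}{n}H_{\mu}(\alpha_{m}^{n})=\inf_{n}\frac{1}{n}H_{\mu}(\alpha_{m}^{n})$ by subadditivity; so in your notation $\overline{d}(\mu)=\limsup_{\epsilon\to0}\inf_{n}a(\epsilon,n)$, \emph{not} $\inf_{n}\limsup_{\epsilon}a(\epsilon,n)$ (these differ in general, with $\limsup_{\epsilon}\inf_{n}\le\inf_{n}\limsup_{\epsilon}$). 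Once you write the correct iterated limit for $\overline{d}$, your ``soft'' derivation of the lower identity evaporates: your formula $\underline{\rm MRID}(\mu)=\liminf_{\epsilon}\lim_{n}a(\epsilon,n)$ would then be \emph{literally identical} to the correct formula for $\underline{d}(\mu)$, which means the hard content is hidden inside your claimed reduction $\overline{\rm MRID}(\mu)=\limsup_{\epsilon}\lim_{n}a(\epsilon,n)$. That claim is not a bookkeeping step: $\overline{\rm MRID}$ involves $\inf_{\diam(\alpha)\le\epsilon}h_{\mu}(T,\alpha)$, an infimum over \emph{all} measurable partitions of diameter at most $\epsilon$; showing this infimum is, after normalizing by $\log\frac{1}{\epsilon}$, asymptotically no smaller than the grid entropy rate is precisely the nontrivial direction $\overline{d}(\mu)\le\overline{\rm MRID}(\mu)$. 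Your Bowen-ball heuristic only gives the easy upper bound on the infimum, never the lower bound.

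Because of this, your ergodic-decomposition route is aimed at the wrong bottleneck, and in any case it hits the obstruction you already identified: $\limsup_{\epsilon}\int(\cdot)\,d\tau\le\int\limsup_{\epsilon}(\cdot)\,d\tau$ by reverse Fatou, which is the wrong direction for your Step $\int\overline{\rm MRID}(\nu)\,d\tau\le\overline{\rm MRID}(\mu)$, and the Vitali/uniform-in-$\nu$ claim is unsubstantiated. The paper avoids the exchange-of-limits problem entirely by bridging through rate distortion. The easy direction $\overline{\rm MRID}(\mu)\le\overline{d}(\mu)$ (resp.\ lower) follows by taking $\alpha=\vee_{j=-N}^{N}\sigma^{-j}\alpha_{m}$, which has small diameter and the same entropy rate as $\alpha_{m}$. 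For the hard direction, the crucial input you are missing is that the Geiger--Koch identity $\overline{d}(\mu)=\overline{\rm rdim}_{L^{2}}([0,1]^{\mathbb{Z}},\sigma,d^{\mathbb{Z}},\mu)$ (and likewise for lower limits) is valid for \emph{every} invariant $\mu$, not only ergodic ones; combined with the general, measure-independent inequality $R_{\mu,L^{p}}(2\epsilon)\le\inf_{\diam(\alpha)\le\epsilon}h_{\mu}(T,\alpha)$ (Lemma~\ref{lem 3.1}, proved by constructing $\eta$ from a partition of diameter $\le\epsilon$), one gets $\overline{d}(\mu)=\overline{\rm rdim}_{L^{2}}(\mu)\le\overline{\rm MRID}(\mu)$ for all $\mu$ with no ergodic decomposition and no interchange of $\limsup$ and integral.
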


Although the  divergent rate  $$\limsup_{\epsilon \to 0}\frac{h_{\mu}(T,\epsilon)}{\logf}$$ of rate-distortion functions (i.e., the $L^p$ and $L^\infty$ rate-distortion dimensions)  has been studied in \cite{lt18,lt19,gs,w21},   less attention has been devoted to establishing the precise relation between $\lim_{\epsilon \to 0}h_{\mu}(T,\epsilon)$ and   Kolmogorov-Sinai entropy of $\mu$. We introduce three types of rate-distortion entropies and characterize their relation using Kolmogorov-Sinai entropy in the following theorem.

\begin{thm} \label{thm 1.2}
	Let  $(X,T)$ be a TDS.   Then
	
	$(1)$ for every $\mu \in E(X,T)$ and $p\geq 1$, $$h_{\mu,L^p}(T)=h_{\mu,L^\infty}(T)=h_{\mu,B}(T)=h_{\mu}(T);$$
	
	$(2)$ if the system admits the $g$-almost product property, then  for all $\mu \in M(X,T)$ and $p\geq 1$, $$h_{\mu,L^p}(T)=h_{\mu,L^\infty}(T)=h_{\mu,B}(T)=h_{\mu}(T),$$ 
	where $h_{\mu,L^p}(T),  h_{\mu,L^\infty}(T),$ and $h_{\mu,B}(T)$ respectively  denote the $L^P$, $L^{\infty}$ and Bowen rate-distortion entropy of $\mu$, see   Subsection \ref{sub 3.2} for their precise definitions.
\end{thm}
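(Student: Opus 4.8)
The plan is to prove Theorem~\ref{thm 1.2} by squeezing all four rate-distortion entropies between $h_\mu(T)$ and itself. A first, purely formal step compares the four definitions of Subsection~\ref{sub 3.2} directly: an $L^\infty$ distortion constraint is stronger than the corresponding $L^p$ one (an admissible $L^\infty$ coupling at level $\epsilon$ is automatically an admissible $L^p$ coupling at level $\epsilon$), so the infimum of normalized mutual information defining $R_{\mu,L^p}(\epsilon)$ runs over a larger family of couplings than that defining $R_{\mu,L^\infty}(\epsilon)$; hence $R_{\mu,L^p}(\epsilon)\le R_{\mu,L^\infty}(\epsilon)$ for every $\epsilon$, and similarly for the Bowen and $r$-scale versions. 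These elementary comparisons arrange $h_{\mu,L^p}(T)$, $h_{\mu,L^\infty}(T)$, $h_{\mu,B}(T)$ and $\lim_{r\to0}h_{\mu,r}(T)$ into a chain of inequalities, so that it is enough to bound the largest of the four above by $h_\mu(T)$ and the smallest below by $h_\mu(T)$.

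\emph{Upper bound (valid for every $\mu\in M(X,T)$).} Fix $\epsilon>0$ and a finite Borel partition $\mathcal P$ with $\diam\mathcal P<\epsilon$ and $\mu(\partial\mathcal P)=0$; such partitions exist because $X$ is compact metrizable. Choosing a representative point in each atom gives a measurable map $\pi\colon X\to X$ with $d(x,\pi(x))<\epsilon$ everywhere, so $x\mapsto\bigl(\pi(x),\pi(Tx),\dots,\pi(T^{n-1}x)\bigr)$ is an admissible coupling of the $n$-orbit block $(x,Tx,\dots,T^{n-1}x)$ at distortion level $\epsilon$, deterministic in $x$, whose normalized mutual information is at most $\tfrac1nH_\mu\!\bigl(\bigvee_{i=0}^{n-1}T^{-i}\mathcal P\bigr)\to h_\mu(T,\mathcal P)\le h_\mu(T)$. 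Hence $R_{\mu,L^\infty}(\epsilon)\le h_\mu(T)$, and the same map controls the Bowen and $r$-scale quantities, so letting $\epsilon,r\to0$ gives the outer upper bound. No ergodicity is used in this step, so it applies to every invariant measure.

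\emph{Lower bound.} Let $\mu\in E(X,T)$, fix a finite partition $\mathcal Q$ with $\mu(\partial\mathcal Q)=0$, and write $\mathcal Q_n=\bigvee_{i=0}^{n-1}T^{-i}\mathcal Q$. Since the boundary is $\mu$-null there is $\delta(\epsilon)\to0$ such that two points at distance $<\epsilon$ lie in a common atom of $\mathcal Q$ except on a set of $\mu$-measure $<\delta(\epsilon)$. If $Y$ is any coupling of the $n$-orbit block with $L^p$ (or $L^\infty$) distortion $<\epsilon$, then by Markov's inequality together with the ergodic theorem $Y$ determines the $\mathcal Q_n$-name of $x$ at all but a fraction $o_\epsilon(1)$ of the $n$ coordinates, uniformly in $n$; Fano's inequality then gives $H_\mu(\mathcal Q_n\mid Y)\le n\,o_\epsilon(1)$, and therefore
\[
I(X^n;Y)\ \ge\ I(\mathcal Q_n;Y)\ =\ H_\mu(\mathcal Q_n)-H_\mu(\mathcal Q_n\mid Y)\ \ge\ H_\mu(\mathcal Q_n)-n\,o_\epsilon(1).
\]
Dividing by $n$, letting $n\to\infty$, then $\epsilon\to0$, and finally refining $\mathcal Q$ along a generating sequence of finite partitions yields $h_{\mu,L^p}(T)\ge h_\mu(T)$ (equivalently, the $r$-scale quantity is identified with $h_\mu(T)$ through a rate-distortion analogue of Katok's entropy formula). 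With the chain of inequalities and the upper bound, this proves part~(1). For part~(2) only this lower bound must be extended to non-ergodic $\mu$: under the $g$-almost product property $E(X,T)$ is entropy-dense in $M(X,T)$, so there are ergodic $\nu_k\to\mu$ in the weak$^{*}$ topology with $h_{\nu_k}(T)\to h_\mu(T)$, and the gluing of orbit segments furnished by that property allows one to compare couplings for $\mu$ with couplings for the $\nu_k$ and conclude $\limsup_k h_{\nu_k,L^p}(T)\le h_{\mu,L^p}(T)$; since $h_{\nu_k,L^p}(T)=h_{\nu_k}(T)$ by part~(1), this yields $h_{\mu,L^p}(T)\ge h_\mu(T)$ and, with the chain and the upper bound, completes the proof.

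\emph{Main obstacle.} The delicate direction is the lower bound. Even for ergodic $\mu$, the Fano estimate must be organized so that the conditional-entropy error is $o(1)$ \emph{per symbol}: this forces choosing partitions with $\mu$-null boundaries together with $\epsilon$ small \emph{uniformly in the block length $n$}, and the limits in $n$ and in $\epsilon$ must be taken in the right order --- this is the non-normalized counterpart of the estimates behind Lindenstrauss--Tsukamoto's variational principle for metric mean dimension quoted in the introduction. For non-ergodic $\mu$ the additional difficulty is that the distortion constraint is an average over the $n$-orbit block which need not survive restriction to a single ergodic component, so the lower bound cannot be read off the ergodic decomposition directly; making the entropy-density and gluing argument under the $g$-almost product property quantitative enough to transfer the lower bound is the technical core of part~(2).
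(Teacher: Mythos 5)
Your proposal is a genuinely different route from the paper's. The upper bound (deterministic coupling from a fine partition) and the chain of comparisons between the four rate-distortion entropies match the paper's Steps~2 and~3, but the lower bound is where you diverge completely. The paper runs the lower bound through Pfister--Sullivan's $\epsilon$-entropy $PS_\mu(T,\epsilon)$: it proves $\lim_{\epsilon\to0}PS_\mu(T,\epsilon)\le h_{\mu,L^1}(T)$ for \emph{every} invariant $\mu$ by (i) showing $PS_\mu$ computed in the mean metric $\overline d_n$ has the same $\epsilon\to0$ limit as in $d_n$ via a Pfister--Sullivan-style combinatorial counting lemma, and (ii) citing Wang's bound \cite[Prop.~4.2]{w21} $\inf_{F\ni\mu}\limsup_n\frac1n\log s(X_{n,F},\overline d_n,\epsilon)\le\frac{L}{L-1}R_{\mu,L^1}(\frac{\epsilon}{6L+2})$; the equality $\lim_{\epsilon\to0}PS_\mu=h_\mu$ is then imported wholesale from Pfister--Sullivan (ergodic case for part~(1), $g$-almost product property case for part~(2)). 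You instead attempt Fano's inequality directly.

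The real gap is in your part~(2). The step $\limsup_k h_{\nu_k,L^p}(T)\le h_{\mu,L^p}(T)$ for an entropy-dense sequence of ergodic $\nu_k\to\mu$ is not a consequence of anything you've set up: the rate-distortion function is concave in the source measure but has no reason to be upper semi-continuous along weak$^*$-convergent sequences, and the ``gluing of orbit segments'' idea would transfer \emph{orbits}, not couplings $(\xi,\eta)$ on $\Omega$ — there is no evident mechanism for producing an admissible coupling for $\mu$ at a comparable rate from admissible couplings for the $\nu_k$. You flag this as ``the technical core'' but offer no route through it; this is exactly the point where the paper's detour through $PS_\mu$ pays off, because Pfister--Sullivan already proved $\lim_\epsilon PS_\mu=h_\mu$ for every invariant $\mu$ under the $g$-almost product property, and $PS_\mu$ is sandwiched below $R_{\mu,L^1}$ by a purely combinatorial argument with no measure-continuity issues.

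A smaller point on part~(1): as written, your Fano bound does not actually need ergodicity, only $T$-invariance. If you estimate the $\mathcal Q$-atom of $T^k\xi$ from $\eta_k$, the per-coordinate error probability is bounded by $\mathbb P\bigl(d(T^k\xi,\eta_k)\ge\delta\bigr)+\mu\bigl(\{x:d(x,\partial\mathcal Q)\le\delta\}\bigr)$; the second term is independent of $k$ by invariance alone, and averaging the first over $k$ and applying Markov to the $L^p$ distortion constraint gives $\bar p_e=o_\epsilon(1)$ uniformly in $n$ with no appeal to the ergodic theorem. Then $\frac1nI(\xi;\eta)\ge\frac1n H_\mu(\mathcal Q^n)-\bigl(H(\bar p_e)+\bar p_e\log|\mathcal Q|\bigr)$ and the rest goes through. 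If you make this explicit, the Fano route would deliver the lower bound for \emph{all} invariant $\mu$ directly, bypassing the entropy-density/gluing argument entirely — a cleaner path than the one you propose for part~(2), and one that would not need the $g$-almost product property at all. As it stands, however, the proposal's part~(2) has a genuine hole, and part~(1) invokes ergodicity at a place where only invariance is used, which obscures what the argument actually delivers.
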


To connect mean dimension theory with ergodic theory, for systems with the marker property Lindenstrauss and Tsukamoto \cite{lt19} established a double variational principle for mean dimension in terms of $L^1$ rate-distortion dimension. Under certain conditions, we prove that the supremum in the double variational principle can be restricted to the set of ergodic measures and that the double variational principle holds for other types of measure-theoretic 
$\epsilon$-entropies studied in \cite{gs21, shi, ycz25}.

\begin{thm}\label{thm 1.3}
	Let $(X,T)$ be a TDS admitting the marker property. If $\mdim(T,X)<\infty$, then  for every $h_{\mu}(T,\epsilon)\in \mathcal{E}\cup \{R_{\mu, L^p}(\epsilon)\}$, 
	\begin{align*}
		\mdim(T,X)&=\min_{d\in \mathcal{D}^{'}(X)} \sup_{\mu \in E(X,T)}\{ \limsup_{\epsilon \to 0}\frac{1}{\logf} h_{\mu}(T,\epsilon)\}\\
		&=\min_{d\in \mathcal{D}^{'}(X)} \sup_{\mu \in M(X,T)}\{ \limsup_{\epsilon \to 0}\frac{1}{\logf} h_{\mu}(T,\epsilon)\},
	\end{align*}
	where  $\mathcal{D}^{'}(X)=\{d\in \mathcal{D}(X): \overline{\rm {mdim}}_{M}(T,X,d)<\infty\}$, and $\mdim(T,X)$ denotes the mean dimension of $(X,T)$; the  measure-theoretic $\epsilon$-entropy $h_{\mu}(T,\epsilon)$  is chosen from the candidate set
	
	$$
	\mathcal{E}= \left\{
	\begin{gathered}
		R_{\mu,L^\infty}(\epsilon),\ 
		\inf_{\substack{\diam(\alpha)\leq\epsilon \\ \alpha\in\mathcal{P}_X}} h_\mu(T,\alpha),\ 
		\inf_{\substack{\diam(\mathcal{U})\leq\epsilon \\ \mathcal{U}\in\mathcal{C}_X^o}} h_\mu(T,\mathcal{U}), \\
		\underline{h}_\mu^K(T,\epsilon,\delta),\ 
		\overline{h}_\mu^K(T,\epsilon,\delta),\ 
		\underline{h}_\mu^K(T,\epsilon), \\
		\overline{h}_\mu^K(T,\epsilon),\ 
		\overline{h}_\mu^{BK}(T,\epsilon),\ 
		{PS}_\mu(T,\epsilon)
	\end{gathered}
	\right\},
	$$ 
	see Subsection \ref{sub 2.3} for their precise definitions.
\end{thm}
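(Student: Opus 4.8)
The plan is to deduce Theorem~\ref{thm 1.3} from three known facts together with a single comparison step. The three facts are: (a) the Lindenstrauss--Tsukamoto double variational principle for systems with the marker property, $\mdim(T,X)=\min_{d\in\mathcal{D}^{'}(X)}\sup_{\mu\in M(X,T)}\limsup_{\epsilon\to0}\frac{R_{\mu,L^1}(\epsilon)}{\logf}$ \cite{lt19}; (b) the identity $\min_{d\in\mathcal{D}^{'}(X)}\over=\mdim(T,X)$ for systems with the marker property, which is part of (or immediate from) \cite{lt19} in view of the metric mean dimension variational principle $\over=\limsup_{\epsilon\to0}\frac{1}{\logf}\sup_{\mu\in M(X,T)}R_{\mu,L^\infty}(\epsilon)$ \cite{lt18} and the inequality $\over\ge\mdim(T,X)$ \cite{lw00}; and (c) the per-metric reduction $\sup_{\mu\in M(X,T)}\limsup_{\epsilon\to0}\frac{R_{\mu,L^1}(\epsilon)}{\logf}=\sup_{\mu\in E(X,T)}\limsup_{\epsilon\to0}\frac{R_{\mu,L^1}(\epsilon)}{\logf}$, valid for every $d\in\mathcal{D}^{'}(X)$ \cite{gs21,ycz23}. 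The comparison step is the following pair of estimates, which I will prove for every $d\in\mathcal{D}^{'}(X)$, every $\mu\in M(X,T)$ and every $h_\mu(T,\epsilon)\in\mathcal{E}\cup\{R_{\mu,L^p}(\epsilon)\}$:
\begin{equation*}
\limsup_{\epsilon\to0}\frac{R_{\mu,L^1}(\epsilon)}{\logf}\ \le\ \limsup_{\epsilon\to0}\frac{h_\mu(T,\epsilon)}{\logf}\ \le\ \over.\tag{$\ast$}
\end{equation*}
Granting $(\ast)$ the theorem follows by bookkeeping: applying $\sup_{\mu\in M(X,T)}$ and then $\min_{d\in\mathcal{D}^{'}(X)}$ to the right-hand inequality of $(\ast)$ and using (b) gives $\min_{d}\sup_{\mu\in M(X,T)}\limsup_\epsilon\frac{h_\mu(T,\epsilon)}{\logf}\le\mdim(T,X)$; applying $\sup_{\mu\in E(X,T)}$ and then $\min_{d}$ to the left-hand inequality and using (a) and (c) gives $\min_{d}\sup_{\mu\in E(X,T)}\limsup_\epsilon\frac{h_\mu(T,\epsilon)}{\logf}\ge\mdim(T,X)$; since trivially $\sup_{\mu\in E(X,T)}\le\sup_{\mu\in M(X,T)}$, the chain of inequalities closes and all quantities equal $\mdim(T,X)$.

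It remains to prove $(\ast)$. For the left-hand inequality it is enough, since $\limsup_\epsilon\frac{R_{\mu,L^1}(c\epsilon)}{\logf}=\limsup_\epsilon\frac{R_{\mu,L^1}(\epsilon)}{\logf}$ for any constant $c>0$, to dominate each candidate by a rescaled $L^1$ rate-distortion function. For $R_{\mu,L^p}(\epsilon)$ and $R_{\mu,L^\infty}(\epsilon)$ this is the monotonicity $R_{\mu,L^1}(\epsilon)\le R_{\mu,L^p}(\epsilon)\le R_{\mu,L^\infty}(\epsilon)$. For $\inf_{\diam(\alpha)\le\epsilon,\,\alpha\in\mathcal{P}_X}h_\mu(T,\alpha)$, a partition $\alpha$ with $\diam(\alpha)\le\epsilon$ yields a deterministic $\epsilon$-admissible coding scheme (map each point to a fixed representative of its atom along the orbit), so $R_{\mu,L^\infty}(\epsilon)\le h_\mu(T,\alpha)$, whence $R_{\mu,L^1}(\epsilon)\le\inf_{\diam(\alpha)\le\epsilon}h_\mu(T,\alpha)$; since every open cover $\UU$ with $\diam(\UU)\le\epsilon$ admits a refining partition of diameter $\le\epsilon$ and $h_\mu(T,\UU)=\inf_{\alpha\succeq\UU}h_\mu(T,\alpha)$, the same bound holds for $\inf_{\diam(\UU)\le\epsilon}h_\mu(T,\UU)$. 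For the Katok, Brin--Katok and $PS_\mu$-type $\epsilon$-entropies the comparison with $R_{\mu,L^1}$ (with the auxiliary parameter $\delta$ becoming irrelevant after dividing by $\logf$, by the usual Katok-type argument) is supplied by \cite{gs21,shi,ycz25}. For the right-hand inequality of $(\ast)$: $R_{\mu,L^p}(\epsilon)\le R_{\mu,L^\infty}(\epsilon)$ and $\limsup_\epsilon\frac{R_{\mu,L^\infty}(\epsilon)}{\logf}\le\limsup_\epsilon\frac{1}{\logf}\sup_{\nu\in M(X,T)}R_{\nu,L^\infty}(\epsilon)=\over$ by \cite{lt18}; a partition or open-cover $\epsilon$-entropy is $\le h_\mu(T)$ (so its rate is $0$) when $h_\mu(T)<\infty$, and in general, after passing to a partition that refines an open cover of diameter $\le\epsilon$ with Lebesgue number $\gtrsim\epsilon$ and nearly realizes the cover's $\mu$-entropy, it is bounded above by the logarithm of the minimal $(n,\epsilon)$-spanning numbers of $(X,T,d)$, whose $\epsilon$-rate equals $\over$; and the Katok, Brin--Katok and $PS_\mu$-type $\epsilon$-entropies are likewise estimated from above by $(n,\epsilon)$-spanning/separated numbers, uniformly in $\delta$, as in \cite{shi,ycz25}. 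In each case, dividing by $\logf$ and letting $\epsilon\to0$ gives $\le\over$.

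Finally, I record the mechanism behind fact (c): given $\mu\in M(X,T)$ with ergodic decomposition $\mu=\int_\Omega\mu_\omega\,dP(\omega)$, the mean $L^1$ rate-distortion function satisfies the sub-averaging inequality $R_{\mu,L^1}(\epsilon)\le\int_\Omega R_{\mu_\omega,L^1}(\epsilon)\,dP(\omega)$; since $d\in\mathcal{D}^{'}(X)$ supplies the uniform dominating constant $\frac{R_{\mu_\omega,L^1}(\epsilon)}{\logf}\le\over<\infty$ for all small $\epsilon$, the reverse Fatou lemma yields $\limsup_\epsilon\frac{R_{\mu,L^1}(\epsilon)}{\logf}\le\operatorname*{ess\,sup}_{\omega}\limsup_\epsilon\frac{R_{\mu_\omega,L^1}(\epsilon)}{\logf}\le\sup_{\nu\in E(X,T)}\limsup_\epsilon\frac{R_{\nu,L^1}(\epsilon)}{\logf}$, and the reverse inequality is immediate. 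I expect the two main obstacles to be: (i) assembling the comparison inequalities in $(\ast)$ uniformly over the whole heterogeneous family $\mathcal{E}$, in particular for $\overline{h}_\mu^{BK}(T,\epsilon)$ and $PS_\mu(T,\epsilon)$, which are defined through the local behaviour of $\mu$ rather than through coverings and are the entries furthest from the rate-distortion side; and (ii) the sub-averaging inequality for the mean $L^1$ rate-distortion function under ergodic decomposition together with the attendant measurability and integrability issues, which is the only place where the hypothesis $\mdim(T,X)<\infty$ (via nonemptiness of $\mathcal{D}^{'}(X)$ and finiteness of $\over$ on it) is genuinely needed.
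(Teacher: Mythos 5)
Your high-level plan is the same as the paper's: start from the Lindenstrauss--Tsukamoto double variational principle and the metric mean dimension variational principle, reduce the supremum from $M(X,T)$ to $E(X,T)$, and close the chain via comparison inequalities among the candidates in $\mathcal{E}$. Your comparison estimates $(\ast)$ and the bookkeeping in steps 1 and 3 are sound, and for ergodic measures the identification of all the rates in $\mathcal{E}$ is correctly delegated to Lemma~\ref{lem 2.1},(3). The genuine gap is your fact~(c), the per-metric ergodic reduction
\[
\sup_{\mu\in M(X,T)}\limsup_{\epsilon\to0}\frac{R_{\mu,L^1}(\epsilon)}{\log\frac1\epsilon}
=\sup_{\mu\in E(X,T)}\limsup_{\epsilon\to0}\frac{R_{\mu,L^1}(\epsilon)}{\log\frac1\epsilon}.
\]
You cite \cite{gs21,ycz23} for this, but those works prove the \emph{outer-$\limsup$} statement---that in the variational principle $\overline{\rm mdim}_M=\limsup_{\epsilon\to 0}\frac{1}{\log(1/\epsilon)}\sup_\mu h_\mu(T,\epsilon)$ the supremum may be taken over ergodic measures---not the \emph{inner-$\limsup$} (per-measure rate-distortion dimension) equality that you need here. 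Your proposed mechanism, a sub-averaging inequality $R_{\mu,L^1}(\epsilon)\le\int_\Omega R_{\mu_\omega,L^1}(\epsilon)\,dP(\omega)$ for the $L^1$ rate-distortion function under ergodic decomposition, is not established; the obvious time-sharing channel picks up an extra $\tfrac1n I(\xi;\omega)$ term in the mutual information whose control is nontrivial when the decomposition is not finite, and you yourself flag it as an open obstacle rather than prove it.

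The paper sidesteps exactly this difficulty. Instead of decomposing $R_{\mu,L^1}$ directly, it introduces the auxiliary quantity
\[
F(\mu,d)=\limsup_{\epsilon\to0}\frac{1}{\log\frac1\epsilon}\sup_{\diam(\UU)\le\epsilon,\ \Leb(\UU)\ge\epsilon/4}h_\mu(T,\UU),
\]
where the inner supremum is over finite open covers. The key point is that for each fixed $\UU$, the local entropy $\mu\mapsto h_\mu(T,\UU)$ is affine and decomposes over the ergodic decomposition (Lemma~\ref{lem 2.1},(2), via \cite{hyz11}); the hypothesis $\overline{\rm mdim}_M(T,X,d)<\infty$ supplies a uniform dominating bound, and a reverse Fatou argument then gives $\sup_{M(X,T)}F(\mu,d)=\sup_{E(X,T)}F(\mu,d)$ (Lemma~\ref{lem 3.7},(2)). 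Finally $F(\mu,d)$ is identified for ergodic $\mu$ with the common value of all rates in $\mathcal{E}$ (Lemma~\ref{lem 3.7},(1) and Lemma~\ref{lem 2.1},(3)), and the rate-distortion side is recovered through the sandwich $R_{\mu,L^1}(2\epsilon)\le\inf_{\diam(\alpha)\le\epsilon}h_\mu(T,\alpha)\le\sup_{\diam(\UU)\le\epsilon,\Leb(\UU)\ge\epsilon/4}h_\mu(T,\UU)$ together with the tame-growth argument from \cite[Lemma~3.10]{lt19} and \cite[Theorem~1.7]{w21}. So the fix for your gap is to route the ergodic decomposition through the open-cover local entropy, where affinity under ergodic decomposition is a known local-entropy-theory fact, rather than attempt it on the rate-distortion function itself.
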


The organization of this paper is as follows. In section \ref{sec 2},  we review the definitions of  metric mean dimensions in both topological and measure-theoretic  situations.   In section \ref{sec 3},  we  prove the main Theorems \ref{thm 1.1}-\ref{thm 1.3}.

\section{Preliminaries}\label{sec 2}

In this section, we recall the definitions of metric mean dimension and its measure-theoretic counterpart for invariant measures, which have been used to characterize systems with infinite topological entropy.

\subsection{Mean dimension of dynamical systems}
Let  $(X,T)$ be a TDS. Denote by $\mathcal{C}_{X}$  the collection of  the covers of  $X$ consisting of the Borel measurable sets of $X$.  Let $\mathcal{P}_{X}$ denote the  collection of  finite Borel  (measurable) partitions  of $X$, and let  $ \mathcal{C}_{X}^o$ denote the   collection of finite open covers  of $X$.  The \emph{join} of two covers $\alpha, \beta \in \mathcal{C}_{X}$ is defined by  $$\alpha \vee \beta:=\{A\cap B: A\in \alpha, B \in \beta\}.$$

The $n$-th join  of $\alpha \in \mathcal{C}_X$, denoted by $\alpha^n$, is  the join of $\alpha$ and its preimage covers $\{ T^{-1}\alpha,...,T^{-(n-1)}\alpha\}.$  We  say that $\alpha$ refines $\beta$,  denoted by $\alpha\succ \beta$,  if each element of $\alpha$ is contained  in some element of $\beta$.

Given  $\VV \in \mathcal{C}_{X}^o$, let ${\rm ord}(\VV)=\max_{x\in X}\limits\sum_{V\in \VV}\limits \chi_V(x)-1$. 
For $\UU\in \mathcal{C}_X^o$,  we define  the \emph{order} of $\UU$ as $$\mathcal D(\UU)=\min_{\VV \succ \UU}\limits {\rm ord}(\VV),$$ 
where $\VV$ ranges over  all finite open covers of $X$  refining $\UU$.

\begin{df}
	The {mean dimension} of $(X,T)$  {\rm \cite{gromov,lw00}} is defined by 
	$${\rm mdim} (T,X)=\sup_{\UU\in \mathcal{C}_X^o}\limits \lim_{n\to \infty}\frac{\mathcal D(\UU^n)}{n},$$
	where the  limit exists, because    the sequence $\{\mathcal{D}(\UU^{n})\}_n$ is  sub-additive {\rm \cite[Corollary 2.5]{lw00}}, that is, $ \mathcal{D}(\UU^{n+m})\leq  \mathcal{D}(\UU^{n}) + \mathcal{D}(\UU^{m})$ for all $n,m \geq 1$.
\end{df}

\subsection{Metric mean dimension  of dynamical systems}
Let $(X,d)$ be a compact metric space and $Z$ be a non-empty subset of $X$. Given $\epsilon>0$, a set $E \subset Z$ is a  \emph{$(d,\epsilon)$-spanning set} of $Z$ if for any $x\in Z$, there exists $y \in E$ such that $d(x,y)<\epsilon$. Denote  the  smallest cardinality of $(d,\epsilon)$-spanning sets of $Z$ by  $r(Z,d,\epsilon)$. A subset $F\subset Z$ is a  \emph{$(d,\epsilon)$-separated set} of $Z$ if for any distinct $x,y \in F$, one has $d(x,y)\geq \epsilon$. Denote  the largest cardinality of $(d,\epsilon)$-separated sets of $Z$ by  $s(Z,d,\epsilon)$. 

Let $(X,d, T)$ be a TDS.
The $n$-th Bowen metric  on $X$ is defined by
$$d_n(x,y)=\max_{0\leq j\leq n-1}d(T^jx,T^jy).$$
Then the Bowen ball of $x$ with radius $\epsilon$ in the  metric $d_n$ is given by 
$$B_n(x,\epsilon)=\{y\in X:d_n(x,y)<\epsilon\}.$$ We define the topological $\epsilon$-entropy of $X$ with respect to $T$ as
$$h_{top}(T,X,d,\epsilon)=\limsup_{n\to \infty} \frac{1}{n} \log s(X,d_n,\epsilon).$$
The \emph{topological entropy of $X$} with respect to $T$ is defined by
$$h_{top}(T,X)=\lim_{\epsilon \to 0}h_{top}(T,X,d,\epsilon)=\sup_{\epsilon > 0}h_{top}(T,X,d,\epsilon).$$

The following  definition is  a dynamical analogue of box  dimension in fractal geometry, which was introduced by Lindenstrauss and Weiss \cite{lw00}.

\begin{df}
	The {upper and lower metric mean dimensions}  of $(X,d,T)$ are respectively defined by 
	\begin{align*}
		\overline{\rm mdim}_M(T,X,d)&= \limsup_{\epsilon\to 0}\frac{h_{top}(T,X,d,\epsilon)}{\log \frac{1}{\epsilon}},\\
		\underline{\rm mdim}_M(T,X,d)&= \liminf_{\epsilon\to 0}\frac{h_{top}(T,X,d,\epsilon)}{\log \frac{1}{\epsilon}}.
	\end{align*}
\end{df}

We define the common value ${\rm mdim}_M(T,X,d)$ as  the metric mean dimension of $(X,T)$ if  $\overline{\rm mdim}_M(T,X,d)=\under$.  Unlike the mean dimension, the values of metric mean dimension  depend on the compatible metrics on $X$. Besides, it is easy to see that  any  dynamical system  with finite  topological entropy has zero metric mean dimension, and hence   metric mean dimension is a useful topological quantity to investigate the topological complexity of infinite entropy systems.

\subsection{Measure-theoretic metric mean dimension of invariant measures}\label{sub 2.3}

It is well-known that the measure-theoretic entropy of invariant measures admits several equivalent definitions. For instance, it can be defined by  finite partitions, finite open covers, Brin-Katok entropy, Katok entropy, and other approaches.

Inspired by the definition of metric mean dimension, the authors \cite{ycz25} introduced  measure-theoretic metric mean dimension using measure-theoretic $\epsilon$-entropy, and showed that the divergent rates of these  measure-theoretic $\epsilon$-entropies  of ergodic measures coincide. In the following, we continue to recall several candidates for measure-theoretic $\epsilon$-entropy in \cite{lt18,gs,shi,ycz25}.

\subsubsection{Rate distortion theory}\label{sub 2.2.1}

The  definitions of rate-distortion functions  are due to Lindenstrauss and Tsukamoto \cite{lt18}. A nice and comprehensive introduction  of the interplay between  the rate-distortion dimension theory and the information  theory  can refer to the monographs \cite{ct06,gra 11}. Here, we only  briefly  introduce some relevant concepts for our proofs.

Let $(X,d,T)$ be a TDS, and let $(\Omega,\mathbb{P})$ be a  probability space and $\mathcal{M}$, $\mathcal{N}$ be  two measurable spaces. Suppose that two measurable maps $\xi:\Omega \rightarrow \mathcal{M}$ and $\eta: \Omega \rightarrow \mathcal{N}$ are given. We define the $\emph{mutual information}$ $I(\xi;\eta)$  of $\xi$ and $\eta$ as the supremum of 
\begin{align*}
	\sum_{1\leq m\leq  M,\atop 1\leq n \leq  N}\mathbb{P}(\xi \in P_m,\eta \in Q_n)\log \frac{\mathbb{P}(\xi \in P_m,\eta \in Q_n)}{\mathbb{P}(\xi \in P_m)\mathbb{P}(\eta \in Q_n)},
\end{align*}
where $\{P_1,...,P_M\}$ and $\{Q_1,...,Q_N\}$  are the partitions of $\mathcal{M}$  and $\mathcal{N}$, respectively. Here, we use  the convention that $0\log \frac{0}{a}=0$ for all $a\geq0$.

A measurable map  $\xi: \Omega \rightarrow \mathcal{M}$ with finitely many images  naturally associates a  finite partition on $\Omega$ via $\xi$, i.e.,  the $\emph{preimage partition}$ of $\Omega$; in this case,  by $H(\xi)$ we denote the entropy of  $\xi$. If  both $\mathcal{M}$ and $\mathcal{N}$ are   finite sets, we can rewrite $I(\xi;\eta)$ as
\begin{align*}
	&\sum_{x\in \mathcal{M},y\in \mathcal{N}}\mathbb{P}(\xi =x,\eta =y)\log \frac{\mathbb{P}(\xi =x,\eta =y)}{\mathbb{P}(\xi =x)\mathbb{P}(\eta =y)}\\
	&=H(\xi)-H(\xi|\eta)=H(\xi)+H(\eta)-H(\xi\vee \eta),
\end{align*}
where $H(\xi|\eta)$ is  the conditional entropy of $\xi$ given  $\eta$. 

The value $I(\xi;\eta)$ is non-negative.  It suggests that it is the name that gives the total information  amount shared by both  the random variables $\xi$ and $\eta$.

Let $\epsilon >0$ and $1\leq p<\infty$.  Given $\mu \in M(X,T)$, we define the  $\emph{$L^{p}$ rate-distortion function}$  $R_{\mu,L^p}(\epsilon)$  of $\mu$ as the infimum of
$$\frac{I(\xi;\eta)}{n},$$
where $n$ ranges over  all natural numbers, and $\xi$ and $\eta=(\eta_0,...,\eta_{n-1})$  are random variables  defined on some probability space $(\Omega, \mathbb{P})$ such that
\begin{enumerate}
	\item $\xi$ takes values in $X$, and its law is given by $\mu$.
	\item Each $\eta_k$ takes  values in $X$ and 
	\begin{align*}
		\mathbb{E}\left(\frac{1}{n}\sum_{k=0}^{n-1}d(T^k\xi,\eta_k)^p\right)<\epsilon^p,
	\end{align*}
\end{enumerate} 
where $\mathbb{E}(\cdot)$  is  the  usual expectation w.r.t.  $\mathbb{P}$.

Let $s>0$. We define  $R_{\mu,L^\infty}(\epsilon,s)$  as the infimum of
$$\frac{I(\xi;\eta)}{n},$$
where $n$  ranges over  all natural numbers, and $\xi$ and $\eta=(\eta_0,...,\eta_{n-1})$  are random variables  defined on some probability space $(\Omega, \mathbb{P})$ such that
\begin{enumerate}
	\item $\xi$ takes values in $X$, and its law is given by $\mu$.
	\item Each $\eta_k$ takes  values in $X$ and 
	\begin{align*}
		\mathbb{E}\left( \text{the number of } 0\leq k\leq n-1~\text{with}~d(T^k\xi,\eta_k)\geq \epsilon\right)<sn.
	\end{align*}
	
\end{enumerate} 
We set $R_{\mu,L^{\infty}}(\epsilon)=\lim_{s\to 0}\limits R_{\mu,L^{\infty}}(\epsilon,s)$, and call  $R_{\mu,L^{\infty}}(\epsilon)$ the  $\emph{$L^\infty$ rate-distortion function}$ of $\mu$.


The \emph{upper $L^p$ and $L^{\infty}$ rate-distortion dimensions of $\mu$}  are respectively defined by 
\begin{align*}
	\overline{\rm {rdim}}_{L^p}(X,T,d,\mu)&=\limsup_{\epsilon \to 0}\frac{R_{\mu,L^p}(\epsilon)}{\logf},\\
	\overline{\rm {rdim}}_{L^\infty}(X,T,d,\mu)&=\limsup_{\epsilon \to 0}\frac{R_{\mu,L^\infty}(\epsilon)}{\logf}.
\end{align*}

One can similarly define  lower $L^p$ and $L^{\infty}$ rate-distortion dimensions of $\mu$ by  $\liminf_{\epsilon \to 0}$. 
For every $\mu \in M(X,T)$, it always holds that
$$\overline{\rm {rdim}}_{L^p}(X,T,d,\mu)\leq  \overline{\rm {rdim}}_{L^\infty}(X,T,d,\mu)$$ for every$1\leq p<\infty$.

Another common definition  used in information theory is the  modification of  rate-distortion  conditions of the aforementioned rate-distortion functions. Let $(A,d)$ be a compact metric space and $\sigma: A^{\mathbb{Z}} \rightarrow A^{\mathbb{Z}}$ be the left shift given by $\sigma((x_n)_{n\in \mathbb{Z}})=(x_{n+1})_{n\in \mathbb{Z}}$. The projection $\pi_n: A^{\mathbb{Z}}\rightarrow A^{n}$ is given by
$$\pi_n((x_n)_{n\in \mathbb{Z}})=(x_0,...,x_{n-1}).$$ Let $\epsilon >0$ and $1\leq p<\infty$.  Given $\mu \in M(A^{\mathbb{Z}},\sigma)$, we define the  $\emph{$L^{p}$ rate-distortion function}$  $\widetilde{R}_{\mu,L^p}(\epsilon)$  of $\mu$ as the infimum of
$$\frac{I(\xi;\eta)}{n},$$
where $n$ ranges over  all natural numbers, and $\xi=(\xi_0,...,\xi_{n-1})$ and $\eta=(\eta_0,...,\eta_{n-1})$  are random variables  defined on some probability space $(\Omega, \mathbb{P})$ such that
\begin{enumerate}
	\item $\xi$ takes values in $A^{n}$, and its law is given by $(\pi_n)_{*}\mu$.
	\item $\eta$ takes  values in $A^{n}$ and  approximates $\xi$ in the sense that
	\begin{align*}
		\mathbb{E}\left(\frac{1}{n}\sum_{k=0}^{n-1}d(\xi_k,\eta_k)^p\right)<\epsilon^p,
	\end{align*}
\end{enumerate} 
where $\mathbb{E}(\cdot)$  is  the   expectation w.r.t.  $\mathbb{P}$.

The two different definitions can be  connected for  certain dynamical systems. If we consider the TDS $(X,d,T):=(A^{\mathbb{Z}},d^{\mathbb{Z}}, \sigma)$, where  $d^{\mathbb{Z}}(x,y)=\sum_{n\in \mathbb{Z}}\frac{d(x_n,y_n)}{2^{|n|}}$, then  Gutman and  \'Spiewak \cite[Proposition C-B.1]{gs} showed that $\widetilde{R}_{\mu,L^2}(\epsilon)$ and $R_{\mu, L^2}(\epsilon)$  are related by the following inequality: for every $\epsilon >0$ and  $\mu\in M(A^{\mathbb{Z}}, \sigma)$,
\begin{align}\label{inequ 2.1}
	R_{\mu, L^2}(14\epsilon)\leq \widetilde{R}_{\mu, L^2}(\epsilon)\leq R_{\mu, L^2}(\epsilon).
\end{align}

\subsubsection{Kolmogorov-Sinai $\epsilon$-entropy}
Given a cover $\alpha \in \mathcal{C}_{X}$, the \emph{diameter} of $\alpha$ is defined by $\diam (\alpha,d):=\sup_{A\in \alpha}\diam (A,d)$, and  we sometimes drop  the dependence $d$ if the metric  is clear on the underlying space. The \emph{Lebesgue number} of  a finite open cover $\mathcal{U}\in \mathcal{C}_{X}^o$, denoted by $\Leb (\UU)$, is the largest positive number $\delta>0$ such that each  $d$-open ball $B_d(x,\delta)$ of $X$ is contained in some element of $\UU$.

Let $(X,d,T)$ be a TDS.   It may happen that a random variable $\xi$ defined on a probability space takes infinitely many values in $X$. To compute the entropy of such random variables, we extend the notion of measure-theoretic entropy to arbitrary measurable partitions.
Let $\alpha$ be a  Borel measurable partition of $X$, not  necessarily finite, and let  $\mu \in M(X,T)$. The \emph{partition entropy} of $\alpha$ w.r.t. $\mu$ is defined by  
$$H_\mu(\alpha)=\sum_{A\in \alpha}-\mu(A)\log \mu(A),$$  
where the convention obeys $\log=\log_e$ and $0\cdot \infty=0$. 

Partition entropy is  non-decreasing for  finer partitions. More precisely, let $\alpha\succ \beta$. If $H_{\mu}(\alpha)=\infty$, we have $H_\mu(\beta)\leq H_\mu(\alpha)$; if $H_{\mu}(\alpha)<\infty$, then   the atoms of $\alpha$ with positive $\mu$-measure   is at most countable, and  each atom (mod $\mu$)  of $\beta$  with  positive $\mu$-measure  is   the   union of some  atoms of $\alpha$ with   positive $\mu$-measure. This  yields that
\begin{align*}
	H_\mu(\beta)=&\sum_{B\in \beta}-\mu(B)\log \mu(B)\\
	\leq& \sum_{B\in \beta}\sum_{A\subset 
		B, A\in \alpha}-\mu(A)\log \mu(A)
	=
	H_\mu(\alpha).
\end{align*}
Then, if $H_\mu(\alpha^n)=\infty$ for some $n$, we set $h_\mu(T,\alpha):=\infty$; otherwise, we define the  \emph{Kolmogorov-Sinai entropy of $\alpha$ w.r.t. $\mu$} as
$$h_\mu(T,\alpha)=\limsup_{n\to \infty}\frac{1}{n}H_\mu(\alpha^n).$$
The  classical \emph{Kolmogorov-Sinai entropy  of $\mu$} is defined  by $$h_{\mu}(T)=\sup_{\alpha \in \mathcal{P}_X}h_{\mu}(T,\alpha).$$ We define the \emph{Kolmogorov-Sinai $\epsilon$-entropy of $\mu$} as
$$\inf_{\diam  (\alpha) \leq \epsilon, \atop  \alpha \in \mathcal{P}_{X}}h_\mu(T,\alpha).$$

\subsubsection{Brin-Katok's  $\epsilon$-entropy} It is defined  by  a ``local" viewpoint.

Let  $\epsilon >0$ and  $\mu \in {M}(X)$.
We   respectively define   \emph{the upper and lower Brin-Katok local $\epsilon$-entropies of $\mu$} as
\begin{align*}
	\overline{h}_{\mu}^{BK}(T, \epsilon):&=\int \limsup_{n\to \infty}-\frac{\log \mu (B_n(x,\epsilon))}{n}d\mu,\\
	\underline{h}_{\mu}^{BK}(T, \epsilon):&=\int \liminf_{n\to \infty}-\frac{\log \mu (B_n(x,\epsilon))}{n}d\mu.
\end{align*}

For every $\mu \in M(X,T)$, it holds that \cite{bk83} $$\lim_{\epsilon\to 0}\underline{h}_{\mu}^{BK}(T, \epsilon)=\lim_{\epsilon\to 0}\overline{h}_{\mu}^{BK}(T, \epsilon)=h_\mu(T).$$

\subsubsection{Katok's $\epsilon$-entropies}
It is defined  using spanning sets \cite{k80} and   finite open covers \cite{s07}.

Given  $\delta \in (0,1)$, $\epsilon>0$, $n \in \mathbb{N}$ and  $\mu \in  M(X)$, let $R_\mu^\delta(T,n, \epsilon)$ denote  the minimal cardinality of a subset $E$ of $X$ satisfying  $$\mu (\cup_{x\in E}B_n(x,\epsilon))> 1-\delta. $$

We  respectively define  the \emph{upper and lower Katok's $\epsilon$-entropies  of $\mu$} as
\begin{align*}
	\overline{h}_{\mu}^K(T,\epsilon, \delta)&=\limsup_{n\to \infty} \frac{1}{n} \log R_\mu^\delta(T,n, \epsilon),\\
	\underline{h}_{\mu}^K(T,\epsilon, \delta)&=\liminf_{n\to \infty} \frac{1}{n} \log R_\mu^\delta(T,n, \epsilon).
\end{align*}

Notice that  the quantities $\overline{h}_{\mu}^K(T,\epsilon, \delta)$, $\underline{h}_{\mu}^K(T,\epsilon, \delta)$ are  non-decreasing as $\delta$ decreases.  This fact allows us to    define two new \emph{upper and lower Katok's $\epsilon$-entropies  of $\mu$}:
\begin{align*}
	\overline{h}_{\mu}^K(T,\epsilon):=\lim_{\delta \to 0}\overline{h}_{\mu}^K(T, \epsilon, \delta),~~
	\underline{h}_{\mu}^K(T,\epsilon):=\lim_{\delta \to 0}\underline{h}_{\mu}^K(T,\epsilon, \delta).
\end{align*}

If $\mu \in E(X,T)$, Katok \cite{k80} showed  that  for every $\delta \in(0,1)$, one has
$$\lim_{\epsilon \to 0}\overline{h}_{\mu}^K(T,\epsilon, \delta)=\lim_{\epsilon \to 0}\underline{h}_{\mu}^K(T,\epsilon, \delta)=h_{\mu}(T).$$ 

Besides, Katok's entropy of ergodic measures  admits a formulation using  finite open covers.  

Let $\delta \in (0,1)$ and $\mathcal{U} \in \mathcal{C}_X^o$. Given  $\mu \in E(X,T)$,  we define $N_{\mu}(\mathcal{U},\delta)$ as the  minimal cardinality of a subfamily of $\UU$ whose union has $\mu$-measure greater than $1-\delta$. The  \emph{Shapira's entropy} of $\mu$ w.r.t. $\UU$  is defined by 
$$h_\mu^S(\mathcal{U}):=\lim_{n\to \infty}\frac{\log N_{\mu}(\mathcal{U}^n,\delta)}{n},$$
where the limit
exists and is independent of the  choice of $\delta \in (0,1)$ \cite[Theorem 4.2]{s07}.

The  \emph{Shapira's $\epsilon$-entropy} of $\mu$ is defined by
$$\inf_{\diam (\UU)\leq \epsilon, \atop \UU \in \mathcal{C}_{X}^o}h_\mu^S(\mathcal{U}).$$

\subsubsection{Pfister and Sullivan's  $\epsilon$-entropy}
Recall that the weak$^{*}$-topology of $M(X)$ is metrizable by   a  compatible metric
$$D(m,\mu)=\sum_{n=1}^{\infty}\frac{|\int f_n dm-\int f_n d\mu|}{2^n (||f_n||+1)},$$
where $\{f_n\}_{n=1}^{\infty}$ is  a dense  subset of $C(X)$. 
Let $\mu \in M(X)$.  By a subset $F \subset M(X)$ we mean a neighborhood of $\mu$ if $F$ contains a $D$-open ball $B_D(\mu,\gamma_0)$ for some $\gamma_0 >0$. We put $$X_{n,F}=\{x\in X: \frac{1}{n}\sum_{j=0}^{n-1} \delta_{T^{j}(x)} \in F\},$$
where $\delta_x$ is the   Dirac mass at  $x\in X$. 

For $\epsilon >0$, we define the \emph{Pfister and Sullivan's  $\epsilon$-entropy of $\mu$} as
\begin{align*}
	PS_\mu(T,\epsilon)=\inf_{F\ni \mu}\limsup_{n \to \infty}\frac{1}{n}\log s(X_{n,F},d_n,\epsilon),
\end{align*}
where the infimum  ranges  over  all neighborhoods  $F$  in $M(X)$ of $\mu$.

In \cite{ps07}, Pfister and Sullivan   proved that  for every $\mu \in E(X,T)$, $$h_{\mu}(T)=\lim_{\epsilon \to 0}\limits PS_\mu(T,\epsilon).$$ 

Now we are in a position to  collect some standard facts  involving the relations of these measure-theoretic $\epsilon$-entropies.

\begin{lem}\label{lem 2.1}
	Let $(X,T)$ be a TDS with a metric $d \in \mathcal{D}(X)$. Then the following statements hold:
	
	$(1)$ For every $\mu \in E(X,T)$ and  $\UU \in \mathcal{C}_{X}^o$, one has $$h_\mu^S(\mathcal{U})=h_\mu(T,\mathcal{U}),$$
	where $h_\mu(T,\mathcal{U}):=\inf_{\alpha \succ \mathcal{U},\alpha \in \mathcal{P}_{X}}h_{\mu}(T,\alpha)$ is called the local measure-theoretic entropy of  $\UU$ w.r.t. $\mu$.
	
	$(2)$ Fix $\UU \in \mathcal{C}_{X}^o$. The local entropy map  $\mu \in M(X,T)\mapsto h_\mu(T,\mathcal{U})$ is affine and upper semi-continuous. Assume that $\mu \in M(X,T)$ and  $\mu=\int_{E(X,T)}m d\tau(m)$ is the  ergodic decomposition of $\mu$. Then
	$$h_\mu(T,\mathcal{U})=\int_{E(X,T)}h_m(T,\mathcal{U})d\tau(m).$$
	
	$(3)$\footnote[2]{A corresponding  statement for the action of amenable groups is given in \cite[Theorem 3.17]{y25}.}  For every $\mu \in E(X,T)$, the upper limit 
	$$\limsup_{\epsilon \to 0}\frac{h_{\mu}(T,\epsilon)}{\logf}$$
	is independent of  the candidate  $h_{\mu}(T,\epsilon)$ chosen from  the candidate set 
	$\mathcal{E}$.
	
	Besides,  for any $h_{\mu}(T,\epsilon)\in \mathcal{E}$, it satisfies the variational principles: 
	\begin{align*}
		\overline{\rm mdim}_M(T,X,d)&=\limsup_{\epsilon \to 0}\frac{1}{\logf}\sup_{\mu \in E(X,T)}h_{\mu}(T,\epsilon)\\
		&=\limsup_{\epsilon \to 0}\frac{1}{\logf}\sup_{\mu \in M(X,T)}h_{\mu}(T,\epsilon).
	\end{align*}
	The corresponding results are  also valid for the case of lower limits.
\end{lem}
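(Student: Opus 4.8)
Since Lemma~\ref{lem 2.1} assembles known facts, the plan is to invoke Shapira's work \cite{s07} and local entropy theory for parts (1)--(2) and the comparison estimates of \cite{ycz25} for part (3), supplying only short bridging arguments, and to treat the three parts in turn. For part (1) I would prove the two inequalities separately. The ``$\leq$'' direction: fixing $\alpha\in\mathcal{P}_X$ with $\alpha\succ\UU$, one has $\alpha^n\succ\UU^n$, so any subfamily of $\alpha^n$ whose union has $\mu$-measure $>1-\delta$ yields, on replacing each atom by an element of $\UU^n$ containing it, a subfamily of $\UU^n$ of no larger cardinality and $\mu$-measure $\geq 1-\delta$; hence $N_\mu(\UU^n,\delta)\leq N_\mu(\alpha^n,\delta)$ for all $n$ and $\delta$. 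By the Shannon--McMillan--Breiman theorem applied to the ergodic measure $\mu$, when $H_\mu(\alpha)<\infty$ one has $\tfrac1n\log N_\mu(\alpha^n,\delta)\to h_\mu(T,\alpha)$ (cover a subset of $\mu$-measure close to $1$ by atoms of nearly equal measure $e^{-nh_\mu(T,\alpha)+o(n)}$), so $h_\mu^S(\UU)\leq h_\mu(T,\alpha)$, which is trivial if $H_\mu(\alpha)=\infty$; taking the infimum over $\alpha\succ\UU$ gives $h_\mu^S(\UU)\leq h_\mu(T,\UU)$. The ``$\geq$'' direction is Shapira's theorem \cite[Theorem 4.2]{s07} (cf.\ also the local entropy theory of Romagnoli, Glasner--Weiss and Huang--Ye--Zhang), proved by extracting from a near-minimal subfamily $\mathcal F_n\subset\UU^n$ with $\mu(\bigcup\mathcal F_n)>1-\delta$ a partition $\alpha\succ\UU$ whose $n$-step entropy $H_\mu(\alpha^n)$ is bounded above by $\log|\mathcal F_n|$ plus a sublinear error, whence $h_\mu(T,\alpha)\leq h_\mu^S(\UU)$ on dividing by $n$ and letting $n\to\infty$.

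For part (2), the affinity, upper semicontinuity and the ergodic-decomposition identity for $\mu\mapsto h_\mu(T,\UU)=\inf_{\alpha\succ\UU}h_\mu(T,\alpha)$ are results of local entropy theory (Romagnoli; Huang--Ye--Zhang), which I would cite. For a self-contained route I would first record Romagnoli's identity $h_\mu(T,\UU)=\inf_n\tfrac1n H_\mu(\UU^n)$, where $H_\mu(\UU^n):=\inf\{H_\mu(\beta):\beta\in\mathcal{P}_X,\ \beta\succ\UU^n\}$ and the sequence $\{H_\mu(\UU^n)\}_n$ is subadditive by $T$-invariance of $\mu$; each $\mu\mapsto H_\mu(\UU^n)$ is then upper semicontinuous, since near a given $\mu_0$ it is dominated by $\mu\mapsto H_\mu(\beta_0)$ for a partition $\beta_0\succ\UU^n$ chosen $\mu_0$-nearly optimally with $\mu_0(\partial\beta_0)=0$ (so continuous at $\mu_0$), and an infimum over $n$ of upper semicontinuous functions is upper semicontinuous. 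The ergodic-decomposition formula $h_\mu(T,\UU)=\int h_m(T,\UU)\,d\tau(m)$ is the substantive input; affinity follows from it at once, because $\mu\mapsto\tau$ is affine (the ergodic decomposition of $t\mu_1+(1-t)\mu_2$ is $t\tau_1+(1-t)\tau_2$).

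For part (3), the independence of $\limsup_{\epsilon\to0}\tfrac{h_\mu(T,\epsilon)}{\logf}$ from the candidate chosen in $\mathcal E$, for ergodic $\mu$, is \cite{ycz25}; its proof runs a cyclic chain of inequalities $h^{(1)}_\mu(T,c_1\epsilon)\leq h^{(2)}_\mu(T,\epsilon)\leq\cdots\leq h^{(1)}_\mu(T,c_k\epsilon)+o(\logf)$ with constants $c_i$ independent of $\mu$ and $\epsilon$, the individual links coming from \cite{k80,s07,ps07,lt18,gs} and from Brin--Katok \cite{bk83}, so that dividing by $\logf$ and letting $\epsilon\to0$ eliminates the constants. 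Because the chain is uniform in $\mu$, the same manipulation applied after taking $\sup_{\mu\in E(X,T)}$ (resp.\ $\sup_{\mu\in M(X,T)}$) transfers the variational principle from any one candidate to all of them; with $R_{\mu,L^\infty}(\epsilon)$ as anchor, the identity $\over=\limsup_{\epsilon\to0}\tfrac1{\logf}\sup_{\mu\in M(X,T)}R_{\mu,L^\infty}(\epsilon)$ is \cite{lt18} and its restriction to $E(X,T)$ follows as in \cite{gs21,ycz23}; the lower-limit statements are identical with $\liminf$ throughout.

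Apart from the Shapira-type construction already flagged in part (1), I expect the main obstacle to be checking that the most delicate candidates close the cyclic chain uniformly in $\epsilon$. For $PS_\mu(T,\epsilon)$ one needs, for ergodic $\mu$, that the empirical sets $X_{n,F}$ carry $\mu$-mass close to $1$ for all large $n$, so that $PS_\mu(T,\epsilon)$ is squeezed between a Katok $\epsilon$-entropy and $h_{top}(T,X,d,\epsilon)$; and $\inf_{\diam(\UU)\leq\epsilon}h_\mu(T,\UU)$ has to be exchanged, up to comparable values of $\epsilon$, with $\inf_{\diam(\alpha)\leq\epsilon}h_\mu(T,\alpha)$ by part (1). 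The remaining steps are routine concatenations of estimates already available in the literature.
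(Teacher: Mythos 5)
Your proposal is correct and follows essentially the same route as the paper, which simply cites Shapira \cite[Theorem 4.4]{s07} for (1), Huang--Ye--Zhang \cite[Proposition 3.8, Theorem 3.13]{hyz11} for (2), and \cite[Theorems 1.1--1.3]{ycz25} for (3); your extra detail (SMB for the ``$\leq$'' direction of (1), Romagnoli's $\inf_n\frac1nH_\mu(\UU^n)$ identity for (2), the cyclic-chain mechanism for (3)) accurately unpacks what those references contain. The only small slip is the citation for the equality in part (1): it is \cite[Theorem 4.4]{s07}, not Theorem 4.2 (the latter only establishes existence of the Shapira-entropy limit and its independence of $\delta$).
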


\begin{proof}
	(1) is given by Shapira \cite[Theorem 4.4]{s07};
	(2) follows from \cite[Proposition 3.8, Theorem 3.13]{hyz11}; (3) is due to \cite[Theorems 1.1-1.3]{ycz25}.
\end{proof}

\begin{rem}
	If we let $h_\mu(T,\varepsilon) \in \{\overline{h}_\mu^K(T,\varepsilon,\delta), \underline{h}_\mu^K(T,\varepsilon,\delta)\}$, the statement  of Lemma \ref{lem 2.1} (3) holds for every $\delta \in (0,1)$.
\end{rem}

Finally, we  present an example to   clarify  the  definitions of metric mean dimensions (cf. \cite[E. Example]{lt18} and Lemma \ref{lem 2.1} (3)).
\begin{ex}
	As   in Subsecion  \ref{sub 2.2.1} above, let $A=[0,1]$ and $d=|\cdot|$  be the standard  Euclidean metric, and let  $\mu=\mathcal{L}^{\otimes\mathbb{Z}}$ be the product measure on $[0,1]^{\mathbb{Z}}$, where $\mathcal{L}$ is the Lebesgue  measure on $[0,1]$. Then  for every $h_{\mu}(T,\epsilon)\in \mathcal{E}$,
	$${\rm mdim}(\sigma, [0,1]^{\mathbb{Z}})={\rm mdim}_M(\sigma,[0,1]^{\mathbb{Z}},d^{\mathbb{Z}})=1=\lim_{\epsilon \to 0}\frac{h_{\mu}(T,\epsilon)}{\logf},$$
	where $d^{\mathbb{Z}}(x,y)=\sum_{n\in \mathbb{Z}}\frac{|x_n-y_n|}{2^{|n|}}$.
\end{ex}

\section{Proofs of main results}\label{sec 3}

\subsection{An answer to  Gutman-\'Spiewak's open question}\label{sub 3.1}

Using the notions introduced in the previous subsections,  in this subsection we prove  Theorem \ref{thm 1.1}.

To this end, we first review the  precise definitions of  the  mean R\'enyi information dimension  and  information dimension rate for stationary stochastic processes.

Inspired  by  the concept of  \emph{R\'enyi information dimension}, Gutman and \'Spiewak \cite{gs21} introduced  the 
\emph{lower and upper mean  R\'enyi information dimensions}  of $\mu \in M(X,T)$:
\begin{align*}
	{\underline{\rm MRID}}(X,T,d,\mu)&=\liminf_{\epsilon \to 0}\frac{1}{\logf}\inf_{\diam  (\alpha) \leq \epsilon}h_\mu(T,\alpha),\\
	{\overline{\rm MRID}}(X,T,d,\mu)&=\limsup_{\epsilon \to 0}\frac{1}{\logf}\inf_{\diam  (\alpha) \leq \epsilon}h_\mu(T,\alpha),
\end{align*}
where the infimum ranges over all measurable  partitions of $X$ with diameter at most $\epsilon$.

Besides, Geiger and Koch  considered an analogous definition  for stationary stochastic processes taking values in $[0,1]$.  Let $([0,1]^{\mathbb{Z}}, \sigma)$ be a TDS, where the product topology of $[0,1]^{\mathbb{Z}}$ is metrizable by the metric $$d^{\mathbb{Z}}((x_n)_{n\in \mathbb{Z}}, (y_n)_{n\in \mathbb{Z}})=\sum_{n\in \mathbb{Z}}\frac{|x_n-y_n|}{2^{|n|}},$$
and $\sigma: [0,1]^{\mathbb{Z}} \rightarrow [0,1]^{\mathbb{Z}}$ is the left shift map. Given $\mu\in M([0,1]^{\mathbb{Z}}, \sigma)$, the  \emph{lower and upper  information dimension rates of $\mu$} are respectively defined by
\begin{align*}
	\underline{d}(\mu)=\liminf_{m \to \infty}\frac{h_{\mu}(\sigma, \alpha_m)}{\log m},~~
	\overline{d}(\mu)=\limsup_{m \to \infty}\frac{h_{\mu}(\sigma, \alpha_m)}{\log m},
\end{align*}
where $\alpha_m:=\pi^{-1}(\{[\frac{i}{m}, \frac{i+1}{m})\cap [0,1]: i\in \mathbb{Z}\})$ is a finite partition of $[0,1]^{\mathbb{Z}}$, and $\pi$ is the projection  assigning each point  in $[0,1]^{\mathbb{Z}}$ to its $0$-coordinate.

Geiger and Koch (\cite[Theorem 1]{gk17} and \cite[Theorem 18]{gk19}) proved  that for every $\mu\in M([0,1]^{\mathbb{Z}}, \sigma)$, one has
\begin{align*}
	\underline{d}(\mu)=\liminf_{\epsilon \to 0}\frac{\widetilde{R}_{\mu, L^2}(\epsilon)}{\logf},~~\overline{d}(\mu)=\limsup_{\epsilon \to 0}\frac{\widetilde{R}_{\mu, L^2}(\epsilon)}{\logf}.
\end{align*}

Therefore,  by (\ref{inequ 2.1}) we conclude that for every  $\mu\in M([0,1]^{\mathbb{Z}}, \sigma)$,
\begin{align}\label{inequ 3.1}
	\underline{d}(\mu)&=\underline{\rm {rdim}}_{L^2}([0,1]^{\mathbb{Z}},\sigma,d^{\mathbb{Z}},\mu),  \nonumber\\
	\overline{d}(\mu)&=\overline{\rm {rdim}}_{L^2}([0,1]^{\mathbb{Z}},\sigma,d^{\mathbb{Z}},\mu).
\end{align}

Later, Gutman and \'Spiewak
\cite[Proposition 4.2]{gs21} showed that for  every  $\mu\in E([0,1]^{\mathbb{Z}}, \sigma)$, one has
\begin{align*}
	{\underline{\rm MRID}}([0,1]^{\mathbb{Z}},\sigma,d^{\mathbb{Z}},\mu)&=\underline{d}(\mu),\\ {\overline{\rm MRID}}([0,1]^{\mathbb{Z}},\sigma,d^{\mathbb{Z}},\mu)&=\overline{d}(\mu),
\end{align*}
and posed a question \cite[Problem 2]{gs21}  whether it holds for non-ergodic measures on  $[0,1]^{\mathbb{Z}}$.

\begin{lem}\label{lem 3.1}
	Let $(X,T)$ be a TDS with a metric $d \in \mathcal{D}(X)$ and $\mu \in M(X,T)$. Then  for every $p\in [1,\infty)$,
	\begin{align*}
		\underline{\rm {rdim}}_{L^p}(X,T,d,\mu)&\leq {\underline{\rm MRID}}(X,T,d,\mu),\\
		\overline{\rm {rdim}}_{L^p}(X,T,d,\mu)&\leq {\overline{\rm MRID}}(X,T,d,\mu).
	\end{align*}
\end{lem}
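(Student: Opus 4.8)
The plan is to bound the $L^p$ rate-distortion function by the Kolmogorov-Sinai $\epsilon$-entropy realized by a single fine partition, and then divide by $\log\frac1\epsilon$ and take limits. Fix $p\in[1,\infty)$ and $\mu\in M(X,T)$. Let $\alpha\in\mathcal{P}_X$ be a finite measurable partition of $X$ with $\diam(\alpha)\le\epsilon$. The first step is to produce, from $\alpha$, an admissible pair $(\xi,\eta)$ for the definition of $R_{\mu,L^p}(\epsilon)$ with controlled mutual information. Take $\xi$ to be the identity-distributed random variable with law $\mu$ (as required), and construct $\eta=(\eta_0,\dots,\eta_{n-1})$ by first applying the encoder that records, for each $k$, the atom of $\alpha$ containing $T^k\xi$, and then letting $\eta_k$ be a fixed representative point of that atom. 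Since $\diam(\alpha)\le\epsilon$, we get $d(T^k\xi,\eta_k)<\epsilon$ pointwise, hence $\mathbb{E}\bigl(\frac1n\sum_{k=0}^{n-1}d(T^k\xi,\eta_k)^p\bigr)<\epsilon^p$, so the pair is admissible (strictness is fine, or one shrinks $\epsilon$ slightly).

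**The mutual information estimate** is the heart of the matter. With this choice, $\eta$ is a deterministic function of the $\alpha^n$-name of $\xi$, so $I(\xi;\eta)\le H(\eta)\le H_\mu(\alpha^n)$, where $\alpha^n=\alpha\vee T^{-1}\alpha\vee\cdots\vee T^{-(n-1)}\alpha$. Dividing by $n$ and taking the infimum over $n$ gives
\begin{align*}
R_{\mu,L^p}(\epsilon)\le\inf_{n\ge1}\frac1nH_\mu(\alpha^n)\le\limsup_{n\to\infty}\frac1nH_\mu(\alpha^n)=h_\mu(T,\alpha),
\end{align*}
where in the case $h_\mu(T,\alpha)=\infty$ the inequality is trivial. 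Taking the infimum over all $\alpha\in\mathcal{P}_X$ with $\diam(\alpha)\le\epsilon$ yields
\begin{align*}
R_{\mu,L^p}(\epsilon)\le\inf_{\diam(\alpha)\le\epsilon,\ \alpha\in\mathcal{P}_X}h_\mu(T,\alpha).
\end{align*}

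**Finally**, divide both sides by $\log\frac1\epsilon$ and pass to the limit. The right-hand side, after dividing by $\log\frac1\epsilon$, is exactly the quantity whose $\liminf$ (resp.\ $\limsup$) as $\epsilon\to0$ defines ${\underline{\rm MRID}}(X,T,d,\mu)$ (resp.\ ${\overline{\rm MRID}}(X,T,d,\mu)$); recall the definition uses the infimum over \emph{measurable} partitions, and finite partitions form a subfamily, so the $\inf$ over $\mathcal{P}_X$ dominates the $\inf$ over all measurable partitions — which is the correct direction for the desired inequality. Hence $\liminf_{\epsilon\to0}\frac{R_{\mu,L^p}(\epsilon)}{\log\frac1\epsilon}\le{\underline{\rm MRID}}(X,T,d,\mu)$ and likewise for the $\limsup$ version, which are precisely the claimed bounds. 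I expect the only subtle point to be bookkeeping around partitions of infinite entropy and the non-strict versus strict inequality in the distortion constraint; both are handled by the monotonicity of partition entropy recalled in Subsection~\ref{sub 2.3} and by an arbitrarily small perturbation of $\epsilon$, so there is no genuine obstacle here — the lemma is a direct ``encode-by-a-partition'' comparison.
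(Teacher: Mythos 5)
The ``encode-by-a-partition'' estimate (your Step~2-analogue) matches the paper's Step~2, but your argument has a genuine logical gap in the final passage from finite to arbitrary measurable partitions: the inequality goes the wrong way. You establish $R_{\mu,L^p}(\epsilon)\le\inf_{\diam(\alpha)\le\epsilon,\,\alpha\in\mathcal{P}_X}h_\mu(T,\alpha)$, where $\mathcal{P}_X$ consists of \emph{finite} partitions, and then observe that since finite partitions are a subfamily of all measurable partitions, the infimum over $\mathcal{P}_X$ dominates the infimum over all measurable partitions. That observation is correct, but it is the wrong direction for what you need. The quantity ${\underline{\rm MRID}}$ is built from $\inf_{\diam(\alpha)\le\epsilon}h_\mu(T,\alpha)$ with $\alpha$ ranging over \emph{all} measurable partitions, which is the \emph{smaller} infimum; so from $R\le\inf_{\mathcal{P}_X}\ge\inf_{\text{all}}$ you cannot conclude $R\le\inf_{\text{all}}$, and the chain collapses.

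The missing ingredient is precisely the paper's Step~1: the non-trivial inequality
\begin{align*}
\inf_{\diam(\alpha)\le\epsilon,\ \alpha\in\mathcal{P}_X}h_\mu(T,\alpha)\ \le\ \inf_{\diam(\alpha)\le\epsilon/8}h_\mu(T,\alpha),
\end{align*}
which reverses the naive inclusion inequality at the cost of rescaling $\epsilon$. The paper proves this by (i) reducing arbitrary measurable partitions of finite entropy to countable ones via the observation that only countably many atoms carry positive measure (this also resolves the measurability issue lurking in your encoding when $\alpha$ has uncountably many atoms), and (ii) passing through a finite open cover $\UU$ with $\diam(\UU)\le\epsilon$ and $\Leb(\UU)\ge\epsilon/4$, using that any partition of diameter $\le\epsilon/8$ refines $\UU$ and the identity $h_\mu(T,\UU)=\widetilde h_\mu(T,\UU)$ for the local entropy of $\UU$ computed over finite versus countable partitions. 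Without this rescaled comparison (or, alternatively, a direct countable-partition version of your encoding step), the final paragraph of your argument does not go through, since the rescaling by a constant factor in $\epsilon$ is harmless after dividing by $\log\frac{1}{\epsilon}$, but the direction of the comparison is essential.
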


\begin{proof}
	Fix $\mu \in M(X,T)$. We divide the proof into  the following two steps:
	
	\emph{Step 1.} We show  the inequality: $$	\inf_{\diam  (\alpha) \leq \epsilon, \atop \alpha \in \mathcal{P}_{X}}\limits h_\mu(T,\alpha) \leq \inf_{\diam  (\alpha) \leq \frac{\epsilon}{8}} h_\mu(T,\alpha)$$ for every $\epsilon >0$, where  the infimum in the RHS ranges over all measurable  partitions of $X$ with diameter at most $\epsilon$.
	
	Clearly, one has $$\inf_{\diam  (\alpha) \leq \epsilon}h_\mu(T,\alpha)<\infty$$ for every $\epsilon >0$.  Let  $\alpha$ be an uncountable partition of $X$ with diameter  at most $\epsilon$ such that $h_{\mu}(T,\alpha)<\infty$. Then for some sufficiently  $n$, we have $$H_{\mu}(\alpha) \leq  H_{\mu}(\alpha^n)<\infty.$$ 
	Then  $\alpha$  has at most countably many atoms  with  positive $\mu$-measure. By   $\mathcal{F}$  we denote these atoms. Then, by the definition of  Borel $\sigma$-algebra,  $X \backslash \cup \mathcal{F}$ is a zero $\mu$-measure  set.  The compactness of $X$ allows  us to get a  new family $\mathcal{F^{'}}$  consisting  of finitely many  pairwise  disjoint Borel subsets of $X \backslash \cup \mathcal{F}$ with  the property that each   has diameter at most $\epsilon$ and  zero $\mu$-measure. Then we have  $h_\mu(T,\alpha)=h_\mu(T,\mathcal{F}\cup \mathcal{F}^{'})$, and hence $\inf_{\diam  (\alpha) \leq \epsilon}h_\mu(T,\alpha)$ suffices to  take the infimum over those (at most) countable partitions  $\alpha$ of $X$ with diameter  at most $\epsilon$ and $h_{\mu}(T,\alpha)<\infty$.

	Given a finite open cover  $\UU$ of $X$, we define  $$\widetilde{h}_\mu(T,\mathcal{U}):=\inf_{\alpha \succ \mathcal{U}}h_{\mu}(T,\alpha),$$ 
	where the infimum is taken  over all  countable Borel partitions of $X$ refining $\UU$. We
	claim that
	\begin{align}\label{inequ 3.3}
		h_\mu(T,\mathcal{U})=\widetilde{h}_\mu(T,\mathcal{U}).
	\end{align}
	To see this, the inequality $h_\mu(T,\mathcal{U})\geq \widetilde{h}_\mu(T,\mathcal{U})$  is clear. Now if $\alpha$ is a countable partition of $X$ that refines $\mathcal{U}$, then  there exists a finite partition  $\beta$ of $X$ such that $\alpha \succ \beta$ and
	$${h}_\mu(T,\mathcal{U})\leq h_{\mu}(T, \beta)\leq h_{\mu}(T,\alpha).$$
	The arbitrariness of  $\alpha$ implies that $h_\mu(T,\mathcal{U})\leq\widetilde{h}_\mu(T,\mathcal{U})$.
	
	Now  take a countable partition  $\alpha$ of $X$ with $\diam(\alpha)\leq \frac{\epsilon}{8} $  and $h_{\mu}(T,\alpha)<\infty$. Let $\UU \in \mathcal{C}_{X}^o$ with $\diam (\UU) \leq \epsilon$ and $\Leb (\UU) \geq \frac{\epsilon}{4}$\footnote[3]{See \cite[Lemma 3.4]{gs21} for the  existence of such open covers. For instance, consider the family $\UU=\{B_d(x,\frac{\epsilon}{2}):x\in E\}$ of open sets of $X$, where $E$ is a finite $\frac{\epsilon}{4}$-net of $X$.}. Since  each partition $\alpha$ of $X$ refining $\UU$ has diameter  at most $\epsilon$, and  the atoms of each partition  $\alpha $  of $X$ with $\diam  (\alpha) \leq \frac{\epsilon}{8}$  are contained in some element of $\UU$. By  (\ref{inequ 3.3}), this yields that
	\begin{align}\label{equ 2.1}
		\inf_{\diam  (\alpha) \leq \epsilon, \atop \alpha \in \mathcal{P}_{X}}h_\mu(T,\alpha)\leq h_\mu(T,\mathcal{U})= \widetilde{h}_\mu(T,\mathcal{U})\leq  h_\mu(T,\alpha),
	\end{align}
	and hence  finishes step 1.

	\emph{Step 2.} We show the inequality:
	\begin{align}\label{equ 2.2}
		R_{\mu, L^p}(2\epsilon)\leq \inf_{\diam  (\alpha) \leq \epsilon, \atop \alpha \in \mathcal{P}_{X}}h_\mu(T,\alpha)
	\end{align}
	for each $\epsilon>0$ and $p\in [1,\infty)$. 
	
	Fix  $\alpha \in \mathcal{P}_{X}$ with $\diam  (\alpha) \leq \epsilon$.  Let $\xi$ be a random variable taking values in $X$  whose  law  obeys $\mu$. Fix  $n \in \mathbb{N}$. Without loss of generality,  assume that  each atom of $\alpha^n$ is non-empty.  Take arbitrarily a point $x_A\in A$ from the atom $A\in \alpha^n$,  and then define a map  $f: X \rightarrow X$ by assigning each $x\in X$ to  $x_A$ if $x\in A$. Let $\eta={(f(\xi),Tf(\xi),...,T^{n-1}f(\xi))}$ be  another random variable. Then
	\begin{align}\label{inequ 3.6}
		\mathbb{E}\left(\frac{1}{n}\sum_{k=0}^{n-1}d(T^k\xi,\eta_k)^p\right)=&\int_{X} \frac{1}{n}\sum_{k=0}^{n-1}d(T^k(x),T^{k}f(x))^pd\mu(x) \nonumber \\
		\leq& \epsilon^p<(2\epsilon)^p.
	\end{align}
	Therefore,  for every $n\geq 1$ we  obtain that
	$$R_{\mu,L^{p}} (2\epsilon)\leq \frac{I(\xi;\eta)}{n}\leq\frac{H(\eta)}{n}=\frac{H_\mu(\alpha^n)}{n}.$$
	This implies that  $R_{\mu, L^p}(2\epsilon)\leq \inf_{\diam  (\alpha) \leq \epsilon, \atop \alpha \in \mathcal{P}_{X}}\limits h_\mu(T,\alpha)$.

	This completes the proof by steps 1 and 2.
\end{proof}

Now we  affirmatively answer the aforementioned question by  proving  Theorem \ref{thm 1.1}.

\begin{proof}[Proof of  Theorem \ref{thm 1.1}]
	We only prove  $${\underline{\rm MRID}}([0,1]^{\mathbb{Z}},\sigma,d^{\mathbb{Z}},\mu)=\underline{d}(\mu).$$ The same proof  works for $	{\overline{\rm MRID}}([0,1]^{\mathbb{Z}},\sigma,d^{\mathbb{Z}},\mu)=\overline{d}(\mu)$.
	
	Fix $m\geq 1$. Choose sufficiently large $N$ (depending on $m$) such that  the diameter of  the partition $\vee_{j=-N}^{N}\sigma^{-j}\alpha_m$ of $[0,1]^{\mathbb{Z}}$ is bounded  above  by
	$$\diam (\vee_{j=-N}^{N}\sigma^{-j}\alpha_m, d^{\mathbb{Z}})<\frac{3}{m}+(\frac{1}{2})^{N-2}<\frac{4}{m}.$$  It follows that
	\begin{align}\label{inequ 3.2}
		\inf_{\diam  (\alpha) \leq \frac{4}{m}}h_\mu(\sigma,\alpha)\leq  h_\mu(\sigma,\vee_{j=-N}^{N}\sigma^{-j}\alpha_m)= h_\mu(\sigma,\alpha_m).
	\end{align}
	Choose a strictly  increasing subsequence $\{m_k\}_k$ of positive  integers such that
	$$\underline{d}(\mu)=\lim_{k \to \infty}\frac{h_{\mu}(\sigma, \alpha_{m_k})}{\log m_k},$$
	and take  $\epsilon_k=\frac{4}{m_{k}}$ for each $k$. Then $\lim_{k\to \infty}\frac{\log \frac{1}{\epsilon_k}}{\log m_k}=1$ by the choice of $\epsilon_k$. Using (\ref{inequ 3.2}), these arguments enable us to obtain 
	$${\underline{\rm MRID}}([0,1]^{\mathbb{Z}},\sigma,d^{\mathbb{Z}},\mu)\leq \underline{d}(\mu)$$
	for any $\mu \in M([0,1]^{\mathbb{Z}},\sigma)$.
	
	On the other hand,  by (\ref{inequ 3.1}) we have $$\underline{d}(\mu)=\underline{\rm {rdim}}_{L^2}([0,1]^{\mathbb{Z}},\sigma,d^{\mathbb{Z}},\mu).$$ Together with Lemma \ref{lem 3.1},  it implies  that for any $\mu \in M([0,1]^{\mathbb{Z}},\sigma)$,
	\begin{align*}
		\underline{d}(\mu)= \underline{\rm {rdim}}_{L^2}([0,1]^{\mathbb{Z}},\sigma,d^{\mathbb{Z}},\mu)\leq {\underline{\rm MRID}}([0,1]^{\mathbb{Z}},\sigma,d^{\mathbb{Z}},\mu).
	\end{align*}

\end{proof}

\subsection{Linking rate-distortion entropy and  Kolmogorov-Sinai entropy}\label{sub 3.2}

From the perspective of statistical mechanics, it is well-known that the topological entropy (defined by  Bowen metric) can be equivalently expressed in terms of  mean metric,  \(g\)-mistake function, and other dynamical metrics. This result extends to the metric mean dimension of TDSs satisfying the (weak) tame growth of covering numbers \cite[Theorem 1.1]{xcy25}. We aim to determine whether the rate-distortion entropy remains unchanged when certain rate-distortion conditions are imposed. To this end, we introduce three types of rate-distortion entropies under different rate-distortion conditions and prove Theorem \ref{thm 1.2}.

Let $(X,T)$ be a TDS with a metric $d \in \mathcal{D}(X)$ and $\mu \in M(X,T)$. 

\subsubsection{$L^p$  rate-distortion entropy}
Notice that $R_{\mu,L^p}(\epsilon)$, $1\leq p<\infty$,  is defined by  the $n$-th mean metric  on $X$, i.e., $$\overline{d}_n(x,y)=\frac{1}{n}\sum_{j=0}^{n-1}d(T^jx,T^jy),$$ 
and  is non-increasing in $\epsilon$. 

We  define  the  \emph{$L^p$ rate-distortion entropy of $\mu$} as
\begin{align*}
	h_{\mu,L^P}(T)=\lim_{\epsilon  \to 0} R_{\mu,L^p}(\epsilon).
\end{align*}

\subsubsection{Bowen and $L^{\infty}$ rate-distortion entropies}

As we have done for topological entropy using Bowen metric,  only replacing the condition (2) presented  for $R_{\mu,L^p}(\epsilon)$ by
\begin{align*}
	\mathbb{E}\left(\max_{0\leq k <n}d(T^k\xi,\eta_k)\right)<\epsilon,
\end{align*}
we  similarly define the Bowen   rate-distortion function $R_{\mu,B}(\epsilon)$  of $\mu$ and the  \emph{Bowen   rate-distortion entropy of $\mu$} as
$$h_{\mu,B}(T)=\lim_{\epsilon  \to 0} R_{\mu,B}(\epsilon).$$

Actually, Lindenstrauss and Tsukamoto established  variational principles for the metric mean dimension (defined by the  mean metric) in terms of \(L^p\) (\(1 \leq p < \infty\)) rate-distortion function \cite[Corollary 33]{lt18}, and another variational principles for the  metric mean dimension  (defined by the  Bowen metric) in terms of \(L^\infty\) rate-distortion function \cite[Theorem 9]{lt18}, without considering the Bowen metric as a distortion condition. This is the reason why we introduce the Bowen rate-distortion entropy of invariant measures. Furthermore,   it is worth noting that the distortion condition involving \(R_{\mu,L^\infty}(\epsilon,r)\) is similar to the definition of an \(r\)-Bowen ball \cite{rhlz11}, i.e.,
$$B_n(x,\epsilon,r):=\{y\in X: \frac{\#\{0\leq j<n:d(T^jx,T^jy)<\epsilon\}}{n}>1-r\},$$
where \(r\in (0,1)\).

Recall that $R_{\mu,L^{\infty}}(\epsilon)=\lim_{r \to 0}R_{\mu,L^{\infty}}(\epsilon,r)$. When $r$ is fixed, it is not straightforward to check the monotonicity of $R_{\mu,L^\infty}(\epsilon)$ in $\epsilon$ from the distortion condition:
$$\mathbb{E}\left( \text{the number of } 0\leq k\leq n-1~\text{with}~d(T^k\xi,\eta_k)\geq \epsilon\right)<nr.$$
Since  the distortion condition  can be rewritten as
\begin{align*}
	\mathbb{E}\left( \text{the number of } 0\leq k\leq n-1~\text{with}~d(T^k\xi,\eta_k)< \epsilon\right)>(1-r)n,
\end{align*}
we know that $R_{\mu,L^{\infty}}(\epsilon,r)$ is non-increasing in $\epsilon$ for every fixed $r\in (0,1)$,  which  enables us to  define the  \emph{$L^{\infty}$  rate-distortion entropy of $\mu$} as
$$h_{\mu,L^\infty}(T)=\lim_{\epsilon \to 0} R_{\mu,L^{\infty}}(\epsilon).$$

\subsubsection{Proof of  Theorem \ref{thm 1.2}}
We also recall the   definition of $g$-almost product property introduced by Pfister and Sullivan \cite{ps07}, which is weaker than the specification property and is realized by a $g$-mistake function.
\begin{df}
	A map $g:\mathbb{N}\to\mathbb{N}$ is said to be a mistake function if $g$ is non-decreasing with the properties that for every $n\geq 2$,  $g(n)<n$, and 
	\begin{equation*}
		\lim_{n\to\infty}\frac{g(n)}{n}=0.
	\end{equation*}
	
	Given $x\in X$ and $\epsilon>0$, the $g$-mistake Bowen ball $B_{n}(g;x,\epsilon)$ of $x$  is defined by
	\begin{align*}
		\left\{y\in X: \exists \Lambda \subset \{0,1,...,n-1\}~\text{with}~n-\#(\Lambda)<g(n)~\atop\text{and} ~\max_{j\in \Lambda}d(T^jx,T^jy)<\epsilon \right\}.
	\end{align*}
\end{df}
Compared with the Bowen ball $B_n(x,\epsilon)$, the $g$-mistake ball allows at most $g(n)$ errors for $\epsilon$-approximating the orbit of a point, and as time evolves, the total errors decrease (rapidly) in \(n\). This gives rise to the partial shadowing property.

\begin{df}{\rm \cite[Definition 2.3]{ps07}}
	A TDS  $(X,d,T)$ is said to  have the $g$-almost product property  if there exists a non-increasing function $m:\mathbb{R}^{+}\to\mathbb{N}$ such that for any $k\in\mathbb{N}$, any $x_{1},\cdots,x_{k}\in X$, any positive number $\epsilon_{1},\cdots,\epsilon_{k}$ and any integers $n_{1}\ge m(\epsilon_{1}),\cdots,n_{k}\ge m(\epsilon_{k})$,
	\begin{equation*}
		\bigcap_{j=1}^{k}T^{-M_{j-1}}B_{n_{j}}(g;x_{j},\epsilon_{j})\ne\emptyset,
	\end{equation*}
	where $M_{0}:=0$, $M_{i}:=n_{1}+\cdots +n_{i}$, $i=1,\cdots,k-1$.
\end{df}

The examples  of TDSs with the  $g$-almost  product property include the  full shifts on  any compact metric  space, the topological mixing subshifts of finite type \cite[Proposition 21.2]{Dgs76},  the   topological mixing locally maximal hyperbolic set, and the $\beta$-shifts \cite{ps07}.

\begin{lem}{\rm \cite[Corollary 3.2, Proposition 6.1]{ps07} \label{lem 3.3}}
	Let  $(X,d,T)$ be a TDS.   Then
	
	$(1)$ for every $\mu \in E(X,T)$, $\lim\limits_{\epsilon \to 0}PS_\mu(T,\epsilon)=h_{\mu}(T)$;
	
	$(2)$ if the system admits the $g$-almost product property, then  $\lim\limits_{\epsilon \to 0}PS_\mu(T,\epsilon)=h_{\mu}(T)$ for all $\mu \in M(X,T)$.
\end{lem}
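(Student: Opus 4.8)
The plan is to recover Lemma~\ref{lem 3.3} from the generic‑points machinery of Pfister and Sullivan \cite{ps07} by establishing the two scale‑$\epsilon$ inequalities
\[
\liminf_{\epsilon\to0}PS_\mu(T,\epsilon)\ge h_\mu(T),\qquad \limsup_{\epsilon\to0}PS_\mu(T,\epsilon)\le h_\mu(T),
\]
and then isolating exactly where ergodicity is used. First I would record that $X_{n,F}$ does not involve $\epsilon$ and $s(X_{n,F},d_n,\cdot)$ is non‑increasing, so $PS_\mu(T,\epsilon)$ is non‑increasing in $\epsilon$ and $\lim_{\epsilon\to0}PS_\mu(T,\epsilon)=\sup_{\epsilon>0}PS_\mu(T,\epsilon)$ exists in $[0,\infty]$; this makes the statement meaningful, including when $h_\mu(T)=\infty$.

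For the lower bound, take $\mu\in E(X,T)$, fix $\delta\in(0,1)$, and let $F$ be an arbitrary open neighbourhood of $\mu$ in $M(X)$. Birkhoff's ergodic theorem gives $\frac1n\sum_{j=0}^{n-1}\delta_{T^jx}\to\mu$ weakly for $\mu$‑a.e.\ $x$, so $\liminf_n\mathbf 1_{X_{n,F}}=1$ $\mu$‑a.e., and Fatou's lemma forces $\mu(X_{n,F})\to1$. A maximal $(d_n,\epsilon)$‑separated set $E_n\subseteq X_{n,F}$ is then $(d_n,\epsilon)$‑spanning for $X_{n,F}$, so for $n$ large $\mu\bigl(\bigcup_{x\in E_n}B_n(x,\epsilon)\bigr)\ge\mu(X_{n,F})>1-\delta$ and hence $\# E_n\ge R_\mu^\delta(T,n,\epsilon)$. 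Therefore $\limsup_n\frac1n\log s(X_{n,F},d_n,\epsilon)\ge\overline h_\mu^K(T,\epsilon,\delta)$ for every $F$, i.e.\ $PS_\mu(T,\epsilon)\ge\overline h_\mu^K(T,\epsilon,\delta)$; letting $\epsilon\to0$ and invoking Katok's formula $\lim_{\epsilon\to0}\overline h_\mu^K(T,\epsilon,\delta)=h_\mu(T)$ \cite{k80} closes the lower bound.

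For the upper bound, fix $\delta,\epsilon>0$ and choose a finite Borel partition $\beta$ with $\diam(\beta)<\epsilon$ and $\mu(\partial\beta)=0$ (possible since only countably many of the relevant sphere boundaries can be $\mu$‑charged); since $\mu$ is $T$‑invariant, also $\mu(\partial(\beta^K))=0$ for every $K$. Pick $K$ large so that the $K$‑block entropy rate $\tfrac1K H_\mu(\beta^K)$ lies within $\tfrac\delta2$ of $h_\mu(T,\beta)\le h_\mu(T)$ (for invariant $\mu$, $\tfrac1n H_\mu(\beta^n)\to h_\mu(T,\beta)$ by subadditivity). Smoothing the indicators of the ($\mu$‑boundaryless) atoms of $\beta^K$ produces continuous test functions whose near‑constancy along an $n$‑orbit forces the empirical $K$‑block frequencies of the $\beta$‑name of $x$ to be close to those of $\mu$; taking $F$ to be the corresponding small weak$^{*}$ neighbourhood, a standard type‑counting estimate bounds the number of admissible $\beta^n$‑names by $e^{n(h_\mu(T)+\delta)}$, and since every atom of $\beta^n$ has $d_n$‑diameter $<\epsilon$ it contains at most one point of a $(d_n,\epsilon)$‑separated set, so $s(X_{n,F},d_n,\epsilon)\le e^{n(h_\mu(T)+\delta)}$ for $n$ large. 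Hence $PS_\mu(T,\epsilon)\le h_\mu(T)+\delta$ for all $\epsilon>0$, and letting $\delta\to0$ gives $\limsup_{\epsilon\to0}PS_\mu(T,\epsilon)\le h_\mu(T)$. Only Shannon--McMillan--Breiman‑type counting and $h_\mu(T,\beta)\le h_\mu(T)$ enter here, so this bound holds for every $\mu\in M(X,T)$ (trivially as $-\infty\le h_\mu(T)$ when $X_{n,F}$ is eventually empty).

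Consequently, for part~(2) only the lower bound for non‑ergodic $\mu$ requires the $g$‑almost product property, and there I would run the quasi‑generic construction of \cite[Proposition 6.1]{ps07}: approximate the ergodic decomposition of $\mu$ by a finite convex combination of ergodic measures that is close to $\mu$ weakly and in entropy (entropy‑density of ergodic measures, itself a consequence of the $g$‑almost product property), concatenate generic orbit blocks of these components using the partial shadowing furnished by the $g$‑mistake Bowen balls, and check that the resulting set lies in $X_{n,F}$ and carries at least $e^{n(h_\mu(T)-\delta)}$ pairwise $(d_n,\epsilon)$‑separated points for $n$ large. I expect the main obstacle to be the upper‑bound step — converting weak$^{*}$ proximity of empirical measures into an honest count at the level of $\beta^n$‑names requires the boundary and block‑frequency bookkeeping sketched above — together with, in part~(2), the gluing construction from the $g$‑almost product property that is being imported from \cite{ps07}.
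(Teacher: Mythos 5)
The paper does not prove this lemma; it cites it directly from Pfister and Sullivan \cite{ps07} (their Corollary 3.2 and Proposition 6.1), so the comparison is with the cited argument rather than with anything internal to the paper. Your reconstruction matches the PS07 route in its essentials: Birkhoff plus Katok's entropy formula gives the ergodic lower bound; a $\mu$-boundaryless partition together with a type-counting estimate gives the upper bound, which as you correctly observe is valid for every invariant measure; and the $g$-almost-product gluing gives the non-ergodic lower bound. One small imprecision in your sketch of part (2): the proof of Proposition 6.1 in \cite{ps07} does not pass through an entropy-density theorem for ergodic measures. It directly concatenates orbit segments generic for the finitely many ergodic components in a convex approximation of $\mu$, and the lower bound on the number of $(d_n,\epsilon)$-separated points is obtained by multiplying the per-block multiplicities; entropy-density of ergodic measures is indeed another consequence of the same almost-product machinery, but it is not the lemma being invoked in that construction. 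The technical obstacle you flag in the upper bound---converting weak$^{*}$ proximity of empirical measures into a count of $\beta^{n}$-names via a boundaryless partition---is exactly the content of the corresponding step in \cite{ps07}, and your description of how to handle it (smoothed indicators, block-frequency bookkeeping) is the right one.
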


\begin{proof}[Proof of Theorem \ref{thm 1.2}]
	We divide the proof into the following three steps:
	
	\emph{Step 1.}
	For every $\mu \in M(X,T)$, we show the inequality:
	\begin{align}\label{equ 3.7}
		\lim\limits_{\epsilon \to 0}PS_\mu(T,\epsilon)\leq h_{\mu,L^P}(T)
	\end{align}
	for all $1\leq p <\infty$.
	
	Fix $\mu \in M(X,T)$. By H\"{o}lder inequality,  we know that
		$h_{\mu,L^P}(T) \geq h_{\mu,L^1}(T)$ for every $p>1$. Hence, we  prove  (\ref{equ 3.7}) by verifying $\lim\limits_{\epsilon \to 0}PS_\mu(T,\epsilon)\leq h_{\mu,L^1}(T)$.
		
		Using the $n$-th mean metric $\overline{d}_n(x,y)$,  we define Pfister and Sullivan's  $\epsilon$-entropy of $\mu$ in the mean sense as
		$$\overline{PS}_\mu(T,\epsilon)=\inf_{F\ni \mu}\limsup_{n \to \infty}\frac{1}{n}\log r(X_{n,F},\overline{d}_n,\epsilon).$$
		We claim that 
		\begin{align}\label{equ 3.8}
			\lim_{\epsilon \to 0}\overline{PS}_\mu(T,\epsilon)=\lim_{\epsilon \to 0}{PS}_\mu(T,\epsilon).
		\end{align}
		
		Indeed, one has $\lim_{\epsilon \to 0}\overline{PS}_\mu(T,\epsilon)\leq \lim_{\epsilon \to 0}{PS}_\mu(T,\epsilon)$ since $\overline{d}_n\leq d_n$, and every $(d_n,\epsilon)$-separated set of $X_{n,F}$ with the largest cardinality is also a $(\overline{d}_n,\epsilon)$-spanning set of $X_{n,F}$. To get the reverse inequality, we  need to show
		\begin{align}\label{equ 3.9}
			\lim_{\epsilon \to 0}\overline{PS}_\mu(T,\epsilon)\geq\lim_{\epsilon \to 0}\inf_{F\ni \mu}\limsup_{n \to \infty}\frac{1}{n}\log s(X_{n,F},d_n,\epsilon).
		\end{align}
		Fix $\epsilon >0$ and $0<\alpha \leq \frac{1}{2}$, and let $F$ be a neighborhood of $\mu$. Choose $\delta \in (0, \frac{\alpha \epsilon}{2})$, and then take $E$  to be  a $(\overline{d}_n,\delta)$-spanning set of $X_{n,F}$ with the smallest cardinality $N:=r(X_{n,F},\overline{d}_n,\delta)$. Then, by a standard  approach, we can construct a finite Borel partition $\mathcal{P}=\{P_1,..,P_N\}$ of $X_{n,F}$ with $\diam(\mathcal{P},\overline{d}_n)<2\delta$, and each atom of $\mathcal{P}$ is non-empty. Choose  arbitrarily a point $x_j$ from $P_j$.  Let $\mathcal{Q}=\{Q_1,...,Q_m\}$ be a finite Borel partition of $X$  with $\diam(\mathcal{Q},d)<\epsilon$. For each $1\leq j \leq N$ and $x\in P_j$, we  define $$\omega(x,n)=(\omega_0(x),...,\omega_{n-1}(x))\in \{0,1,...,m\}^n,$$ where $\omega_k(x)=0$ if $d(T^kx,T^kx_j)< \frac{\epsilon}{2}$; otherwise, we let $\omega_k(x)=t$ satisfy $T^kx\in Q_t$. If  $x,y\in P_j$ and $\omega(x,n)=\omega(y,n)$, then $d_n(x,y)<\epsilon$.  Hence, for  any $(d_n,\epsilon)$-separated set $T_j$ of $P_j$ with the largest cardinality, one has $\omega(x,n)\not=\omega(y,n)$ for any  distinct $x,y \in T_j$. Noticing that $\overline{d}_n(x,x_j)<\delta$  for any $x\in P_j$,  by the choice of $\delta$ we have
		$$\#\{k:\omega_k(x)\not=0\}\leq n\alpha.$$
		Then  the cardinality of $T_j$ is bounded above by
		$$\#T_j\leq \sum_{j\leq n\alpha}\binom{n}{j}m^j\leq\sum_{j\leq n\alpha}\binom{n}{j}m^{n\alpha}\leq2^{n\gamma(\alpha)}m^{n\alpha},$$
		where $\gamma(\alpha)=-\alpha \log_2\alpha-(1-\alpha)\log_2(1-\alpha)$, and we used  a  combinatorial lemma  \cite[Lemma 2.1]{ps05}  for the third  inequality. Thus, we get
		\begin{align*}
			s(X_{n,F},d_n,\epsilon)\leq  r(X_{n,F},\overline{d}_n,\delta)\cdot e^{n(\gamma(\alpha)\log2 +\alpha \log m)},
		\end{align*}
		which implies that
	\begin{align*}
	&\limsup_{n \to \infty}\frac{1}{n}\log s(X_{n,F},d_n,\epsilon)\\
	\leq& \limsup_{n \to \infty}\frac{1}{n}\log r(X_{n,F},\overline{d}_n,\delta)+ (\gamma(\alpha)\log2 +\alpha \log m).
	\end{align*}
		Taking the infima over all neighborhoods \(F\) of \(\mu\) and letting \(\alpha \to 0\) (hence \(\delta \to 0\)), we have
		\begin{align*}
		&\inf_{F\ni \mu}\limsup_{n \to \infty}\frac{1}{n}\log s(X_{n,F},d_n,\epsilon)\\
		\leq& \lim_{\delta \to 0}\inf_{F\ni \mu}\limsup_{n \to \infty}\frac{1}{n}\log r(X_{n,F},\overline{d}_n,\delta),
		\end{align*}   and hence yields  the desired  inequality (\ref{equ 3.9}) by letting $\epsilon \to 0$.  So the equality  (\ref{equ 3.8}) holds. Therefore, we have
		$$\lim_{\epsilon \to 0}{PS}_\mu(T,\epsilon)=\lim_{\epsilon \to 0}\inf_{F\ni \mu}\limsup_{n \to \infty}\frac{1}{n}\log s(X_{n,F},\overline{d}_n,\epsilon).$$
		
		By \cite[Proposition 4.2]{w21}, for any $\mu \in M(X,T)$\footnote[4]{Although the statement is given for ergodic measures, the proof applies to  all invariant measures.}, one has that, for any $L>2$ and $\epsilon >0$, 
		$$\inf_{F\ni \mu}\limsup_{n \to \infty}\frac{1}{n}\log s(X_{n,F},\overline{d}_n,\epsilon)\leq \frac{L}{L-1}R_{\mu,L^1}(\frac{1}{6L+2}\epsilon).$$
		Letting $\epsilon \to 0$ and then letting $L\to \infty$, we get  $$\lim\limits_{\epsilon \to 0}PS_\mu(T,\epsilon)\leq h_{\mu,L^1}(T).$$

		\emph{Step 2.} For any $\mu \in M(X,T)$, we show that 
		\begin{itemize}
			\item [(1)] $h_{\mu,L^1}(T)\leq h_{\mu,B}(T)$;
			\item [(2)]  $h_{\mu,L^P}(T)\leq h_{\mu,L^{\infty}}(T)$  for all $1\leq p <\infty$.
		\end{itemize}
		
		$(1)$. It follows from  the fact $R_{\mu,L^1}(\epsilon)\leq R_{\mu,B}(\epsilon)$ for  $\epsilon >0$.
		
		$(2)$.
		Now assume that  the random variables $\xi$ and $\eta$ satisfy the  distortion condition:
		$$\mathbb{E}\left( \text{the number of } 0\leq k\leq n-1~\text{with}~d(T^k\xi,\eta_k)\geq \epsilon\right)\leq nr.$$ 
		For sufficiently small $r>0$, we have
		$$\mathbb{E}\left(\frac{1}{n}\sum_{k=0}^{n-1}d(\xi_k,\eta_k)^p\right)\leq \epsilon^p+r\cdot\diam(X,d)^p<(2\epsilon)^p.$$
		This implies that $ R_{\mu,L^{p}}(2\epsilon)\leq \lim_{r \to 0} R_{\mu,L^{\infty}}(\epsilon,r)$, and hence $h_{\mu,L^P}(T)\leq h_{\mu,L^{\infty}}(T)$.

		\emph{Step 3.} For any $\mu \in M(X,T)$, we show that
		$$h_{\mu,B}(T)\leq h_{\mu}(T)$$
		
		Similar to the  proof of (\ref{inequ 3.6}),  for every $\epsilon >0$ we have $$R_{\mu,B}(2\epsilon) \leq \inf_{\diam  (\alpha) \leq \epsilon, \alpha \in \mathcal{P}_{X}}\limits h_\mu(T,\alpha).$$
		Since $$\lim_{\epsilon  \to 0} \inf_{\diam  (\alpha) \leq \epsilon, \alpha \in \mathcal{P}_{X}} h_\mu(T,\alpha)=h_{\mu}(T)$$ for all $\mu \in M(X,T)$, this implies that $h_{\mu,B}(T)\leq h_{\mu}(T)$.
		
		We complete the proof by  Lemma  \ref{lem 3.3} and these inequalities  stated in steps 1-3. 
	\end{proof}

	\begin{ex}
		\begin{itemize}
			\item [(1)] By Brin-Katok's entropy formula {\rm \cite{bk83}}, every translation of a compact metrizbale group $G$ with the Haar measure $\mu$ has zero  measure-theoretic entropy. Hence, we  have  $$h_{\mu,L^p}(T)=h_{\mu,L^\infty}(T)=h_{\mu,B}(T)=0.$$
			\item [(2)]  Let $\mu$ be the product measure of the $(p_0,...,p_{k-1})$-shift over $\{0,...,k-1\}^{\mathbb{Z}}$. Then $$h_{\mu,L^p}(T)=h_{\mu,L^\infty}(T)=h_{\mu,B}(T)=\sum_{j=0}^{k-1}-p_j \log p_j.$$
		\end{itemize}
	\end{ex}
	
	We remark that, in the Step 1 of the proof of Theorem \ref{thm 1.2}, the \(g\)-almost product property  is  used to  obtain a lower bound  for the \(L^p\) rate-distortion entropy of  non-ergodic invariant  measures.   It remains unclear whether there exists  a TDS without the $g$-almost product property such that for some   non-ergodic  invariant measure,  its  rate-distortion entropies are strictly less than the measure-theoretic entropy. The following example shows that the g-almost product property is merely a sufficient condition for the validity of Theorem 1.2, at least on a specific measure.
	
	\begin{ex}

		Let $\Sigma_2=\{0,1\}^{\Z}$ be a product space equipped with a compatible metric
		\[
		d_0(x,y)=\sum_{n\in\Z}\frac{|x_n-y_n|}{2^{|n|}},
		\]
		and let $(\Sigma_2,d_0,\sigma)$ be the usual symbolic system. Consider two disjoint copies of $\Sigma_2$: $X_1=\Sigma_2\times\{1\}$ and $X_2=\Sigma_2\times\{2\}$.  Now endow $X=X_1\cup X_2$ with the metric
		$$d\big((x,i),(y,j)\big)=d_0(x,y)+|i-j|,$$
		and define  a homeomorphism map $T:X\to X$   by
		\[
		T(x,i)=(\sigma x,i),\quad i=1,2,
		\]
		where $\sigma: \Sigma_2 \rightarrow \Sigma_2$ is the left shift.  
		Then $(X,d,T)$ is a TDS.
		
		Let $P=(\frac{1}{2},\frac{1}{2})^{\mathbb{Z}}$ be  the Bernoulli probability measure on  $\Sigma_2$. For $i=1,2$, we  put
		$\nu_i(E\times \{i\})=P(E)$ for  every Borel set $E$ of $\Sigma_2$, and define $\mu_i(A)=\nu_i(A\cap X_i)$ for any Borel set of $X$. Both $\mu_1$ and $ \mu_2$ are ergodic w.r.t. $T$ and satisfy $h_{\mu_1}(T)=h_{\mu_2}(T)=\log 2$. We let 
		\[
		\mu=\frac12\mu_1+\frac12\mu_2.
		\]
		Then $\mu$ is  a non-ergodic $T$-invariant measure and $h_\mu(T)=\frac12 h_{\mu_1}(T)+\frac12 h_{\mu_2}(T)=\log 2.$
		
		We claim that  $(X,d,T)$ has no $g$-almost product property and 
		$$h_{\mu,L^p}(T)=h_{\mu,L^\infty}(T)=h_{\mu,B}(T)=\log2.$$
		
		i) Assume that there exists a mistake function $g$ and a non-increasing function $m:\R^+\to\N$ such that the $g$-almost product property holds for $(X,d,T)$. Take $\epsilon=\frac{1}{4}$,   choose any a point $x_1\in X_1$, $x_2\in X_2$, and set $n_1=m(\epsilon)$, $n_2=m(\epsilon)$.  Then the $g$-almost product  property implies that 
		\[
		S:=B_{n_1}(g;x_1,\epsilon)\cap T^{-n_1}B_{n_2}(g;x_2,\epsilon)\not=\emptyset.
		\]
		Pick up $y\in S$. This means that in the time interval $[0,n_1-1]$, we have $d(T^jy,T^jx_1)<\epsilon$ for  at least $n_1-g(n_1)$ many $j$. Noticing that $T^jx_1\in X_1$ and ${\rm dist}(X_1,X_2)\geq1$, we must have $y\in X_1$. Besides, the fact  $T^{n_1}y\in B_{n_2}(g;x_2,\epsilon)$ forces  that
		$$1\leq  d(T^{n_1+j}y,T^jx_2)<\epsilon$$
		for  at least $n_2-g(n_2)$ indices $j\in [n_1,n_1+n_2-1]$, a contradiction with $\epsilon <1$. Hence, the $g$-almost product property fails for $(X,d,T)$.

		ii)  By the proof of Theorem \ref{thm 1.2}, $$h_{\mu,L^p}(T),h_{\mu,L^\infty}(T),h_{\mu,B}(T)$$ are all bounded above by $h_{\mu}(T)$.Therefore, we  only need to show  the rate-distortion entropies of $\mu$ is no less than $\log 2$.	We only treat the $L^p$ case, and the similar arguments can be applied to  the  rate-distortion entropies $h_{\mu,L^\infty}(T)$ and $h_{\mu,B}(T)$.
		
		Now  let $\xi$ and $\eta=(\eta_0,\dots,\eta_{n-1})$ be random variables on a probability space $(\Omega, \mathbb{P})$  such that $\text{\rm{Law}}(\xi)=\mu$ and  $\eta$  takes values in $X^n$ satisfying
		\[
		\E\Big(\frac{1}{n}\sum_{k=0}^{n-1}d(T^k\xi,\eta_k)^p\Big)<\epsilon^p.
		\]
		
		For $i=1,2$, put $A_i=\{\xi\in X_i\}$ and let $ \mathbb{P}_{{A_i}}(\cdot)=\frac{\mathbb{P} (\cdot~ \cap  A_i)}{\mathbb{P}(A_i)}$, where $\mathbb{P}(A_i)=1/2$.  For the probability space  $(A_i, \mathbb{P}_{A_i})$,  noticing that the  random variable $\xi$ restricted on $A_i$ obeys the  law $\mu_i$, we write  
		\begin{align*}
			\E\Big(\frac{1}{n} \sum_{k=0}^{n-1}d(T^k\xi,\eta_k)^p\mid A_i\Big):&=
			\int_{A_i} \sum_{k=0}^{n-1}d(T^k\xi(\omega),\eta_k(\omega))^p  d \mathbb{P}_{A_i}(\omega)\\
			&=\frac{1}{\mathbb{P}(A_i)}\int_{\Omega} \sum_{k=0}^{n-1}d(T^k\xi(\omega),\eta_k(\omega))^p  d \mathbb{P}(\omega)
		\end{align*}
		to denote the expectation, and  write  $I(\xi;\eta\mid A_i)$ for the mutual information. Then we have
		$$\E\Big(\frac{1}{n} \sum_{k=0}^{n-1}d(T^k\xi,\eta_k)^p\mid A_i\Big)<2\epsilon^p\leq (2\epsilon)^p,$$
		and hence
		\[
		\frac1n I(\xi;\eta\mid A_i)\ge R_{\mu_i,L^p}(2\epsilon).
		\]
		
		Consider  a random variable $Z:=f(\xi)$ on $(\Omega,\mathbb{P})$  taking  values $\{0,1\}$, where $f(x)=1$ if $x\in X_1$ and $f(x)=0$ if $x\in X_2$.  Since $Z$ is a function of $\xi$, the chain rule for mutual information gives
		\begin{align*}
			I(\xi;\eta)=I(\xi,Z;\eta)&=I(Z;\eta)+I(\xi;\eta\mid Z)\\
			&\geq  I(\xi;\eta\mid Z)\\
			&=\mathbb{P}(A_1)I(\xi;\eta\mid A_1)+\mathbb{P}(A_2)I(\xi;\eta\mid A_2)\\
			&=\frac12 I(\xi;\eta\mid A_1)+\frac12 I(\xi;\eta\mid A_2).
		\end{align*}
		Consequently, we have
		$$\frac{1}{n}	I(\xi;\eta)\geq \frac{{1}}{2}(R_{\mu_1,L^p}(2\epsilon)+R_{\mu_2,L^p}(2\epsilon)),$$
		which implies that $R_{\mu,L^p}(\epsilon)\geq \frac{{1}}{2}(R_{\mu_1,L^p}(2\epsilon)+R_{\mu_2,L^p}(2\epsilon))$.	
		By Theorem  \ref{thm 1.2} (1), we have  $\lim_{\epsilon\to0}R_{\mu_i,L^p}(\epsilon)=h_{\mu_i}(T)=\log2$. This yields that 
		\[
		\lim_{\epsilon\to0}R_{\mu,L^p}(\epsilon)\ge\log2.
		\]
		To sum up,  we get $	h_{\mu,L^p}(T)=h_{\mu}(T)=\log2.$	
	\end{ex}

	\subsection{Ergodic Lindenstrauss-Tsukamoto's double variational principle}
	
	In this subsection, we  prove Theorem \ref{thm 1.3}.

	A TDS  $(X, T)$ is said to have the \emph{marker property}  if for every  $n \geq 1$, there exists an open set $U\subset X$  such that
	$$U\cap T^j U=\emptyset, 1\leq j\leq n,  \mbox{and}~X=\cup_{n\in \mathbb{Z}} T^n U.$$
	
	The examples include  aperiodic minimal systems \cite[Lemma 3.3]{l99},  aperiodic finite-dimensional  systems \cite[Theorem 6.1]{g15},  and  the extension of an aperiodic system  which has a countable number of minimal subsystems \cite[Theorem 3.5]{g17}.   It is easy to  see that the  marker property implies  aperiodicity. The converse is false since there exist aperiodic systems without the marker property \cite{tty22,s23}. 
	The  marker property finds applications in  embedding problems of dynamical systems (cf. \cite{lw00,g15, glt16, g17,gt20}). For instance,   it  was  shown that \cite[Theorem 3.12]{lt19} for  any TDS $(X,T)$  with the marker property, there exists a metric $d \in \mathcal{D}(X)$ such that 
	\begin{align}\label{equ 2.4}
		\mdim(T,X)= \overline{\rm mdim}_M(T,X,d).
	\end{align}
	
	Besides, for systems with the marker property Lindenstrauss and Tsukamoto \cite{lt19} proved  a double variational principle for mean dimension:
	\begin{align*}
		\mdim(T,X)=\min_{d\in \mathcal{D}(X)} \sup_{\mu \in M(X,T)}\overline{\rm {rdim}}_{L^1}(X,T,d,\mu).
	\end{align*}
	Additionally, if the system  has finite mean dimension, we show that the  supremum  suffices to take  over the set of ergodic measures and that the result is  valid for other types of measure-theoretic  $\epsilon$-entropy  considered in $\mathcal{E}$.   This is realized by  introducing the following  auxiliary quantity:
	
	\begin{df}
		Let $(X,T)$ be a TDS with a metric $d\in \mathcal{D}(X)$. For every $\mu \in M(X,T)$, we define  
		$$F(\mu,d):=\limsup_{\epsilon \to 0}\frac{1}{\logf}\sup_{\diam (\UU)\leq \epsilon,  \Leb (\UU)\geq \frac{\epsilon}{4}}h_{\mu}(T,\UU),$$
		where the supremum ranges over all finite open covers of $X$ with $\diam (\UU)\leq \epsilon$ and $ \Leb (\UU)\geq \frac{\epsilon}{4}$.
	\end{df}
	
	\begin{lem}\label{lem 3.7}
		Let $(X,T)$ be a TDS with a metric $d\in \mathcal{D}(X)$. Then 
		
		(1)  for  every $\mu \in E(X,T)$ and   $\epsilon >0$, we have
		\begin{align*}
		\overline{h}_\mu^{K}(T,2\epsilon) \leq & \sup_{\diam (\UU)\leq \epsilon,  \Leb (\UU)\geq \frac{\epsilon}{4}}h_{\mu}(T,\UU)\\
		=&\sup_{\diam (\UU)\leq \epsilon,  \Leb (\UU)
			\geq  \frac{\epsilon}{4}}h_{\mu}^S(\UU) \leq  \overline{h}_{\mu}^K(T, \frac{\epsilon}{4}).
		\end{align*}
		
		Consequently, for every $\mu \in E(X,T)$, $$F(\mu,d)=\limsup_{\epsilon \to 0}\frac{h_{\mu}(T,\epsilon)}{\logf}$$ holds  for all  $h_{\mu}(T,\epsilon) \in \mathcal{E}$. Furthermore, the variational principle $$\overline{\rm mdim}_M(T,X,d)=\limsup_{\epsilon \to 0} \frac{1}{\logf}\sup_{\mu \in E(X,T)} \sup_{\diam (\UU) \leq \epsilon, \atop \Leb(\UU)\geq \frac{\epsilon}{4}}h_\mu(T,\mathcal{U})$$
		is  valid for $\overline{\rm mdim}_M(T,X,d)$.
		
		(2) If   $\overline{\rm mdim}_M(T,X,d)<\infty$, then 
		\begin{align*}
			\sup_{\mu \in M(X,T)}F(\mu,d)=\sup_{\mu \in E(X,T)}F(\mu,d).
		\end{align*}
	\end{lem}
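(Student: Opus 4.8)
The plan is to prove (1) for $\mu\in E(X,T)$ (the case actually used below, and the only one for which the middle term is even defined, since Shapira's entropy is attached to ergodic measures), then deduce the ``consequently'' assertion from Lemma \ref{lem 2.1}(3), and finally obtain (2) by pushing the ergodic decomposition of local entropy through a reverse Fatou estimate. For the middle equality of (1), Lemma \ref{lem 2.1}(1) gives $h_\mu^S(\UU)=h_\mu(T,\UU)$ for $\mu\in E(X,T)$ and every $\UU$, so the two suprema agree term by term. For the first inequality I would fix an admissible cover $\UU$ (that is, $\diam(\UU)\le\epsilon$ and $\Leb(\UU)\ge\epsilon/4$) and a finite partition $\alpha\succ\UU$ with $h_\mu(T,\alpha)<\infty$ (if there is no such $\alpha$ the inequality is vacuous); then $\diam(\alpha)\le\epsilon$, so each atom of $\alpha^n$ has $d_n$-diameter at most $\epsilon$. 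Given $\gamma>0$ and $\delta\in(0,1)$, the Shannon--McMillan--Breiman theorem applied to the countable partition $\alpha$ shows that for all large $n$ the set $G_n:=\{x:\mu(\alpha^n(x))\ge e^{-n(h_\mu(T,\alpha)+\gamma)}\}$ has $\mu(G_n)>1-\delta$ and is covered by at most $e^{n(h_\mu(T,\alpha)+\gamma)}$ atoms of $\alpha^n$; choosing one point from each such atom produces $E_n$ with $\mu(\cup_{x\in E_n}B_n(x,2\epsilon))\ge\mu(G_n)>1-\delta$, whence $R_\mu^\delta(T,n,2\epsilon)\le e^{n(h_\mu(T,\alpha)+\gamma)}$ and $\overline{h}_\mu^K(T,2\epsilon,\delta)\le h_\mu(T,\alpha)+\gamma$. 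Letting $\gamma\to0$, then $\delta\to0$, and passing to $\inf_{\alpha\succ\UU}$ and then $\sup_\UU$ gives the first inequality.

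For the last inequality I would fix an admissible $\UU$; since $\Leb(\UU)\ge\epsilon/4$, each ball $B_d(T^jx,\epsilon/4)$ lies in some member of $\UU$, so $B_n(x,\epsilon/4)\subseteq\cap_{j=0}^{n-1}T^{-j}U_j$ for suitable $U_j\in\UU$, i.e.\ $B_n(x,\epsilon/4)$ is contained in a single member of $\UU^n$. Thus, given a set $E_n$ realizing $R_\mu^\delta(T,n,\epsilon/4)$, the at most $|E_n|$ members of $\UU^n$ containing the balls $B_n(x,\epsilon/4)$, $x\in E_n$, cover a set of $\mu$-measure $>1-\delta$, so $N_\mu(\UU^n,\delta)\le R_\mu^\delta(T,n,\epsilon/4)$; dividing by $n$ and letting $n\to\infty$ gives $h_\mu^S(\UU)\le\overline{h}_\mu^K(T,\epsilon/4,\delta)\le\overline{h}_\mu^K(T,\epsilon/4)$, and $\sup_\UU$ closes the chain. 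From this chain and the fact that $\log\frac{1}{c\epsilon}/\logf\to1$ as $\epsilon\to0$ for every $c>0$, one gets $F(\mu,d)=\limsup_{\epsilon\to0}\overline{h}_\mu^K(T,\epsilon)/\logf$ for $\mu\in E(X,T)$; since $\overline{h}_\mu^K(T,\epsilon)\in\mathcal{E}$, Lemma \ref{lem 2.1}(3) identifies this with $\limsup_{\epsilon\to0}h_\mu(T,\epsilon)/\logf$ for every $h_\mu(T,\epsilon)\in\mathcal{E}$. Finally, taking $\sup_{\mu\in E(X,T)}$ across the chain and invoking the variational-principle half of Lemma \ref{lem 2.1}(3) for the candidate $\overline{h}_\mu^K(T,\epsilon)$ squeezes $\limsup_{\epsilon\to0}\frac{1}{\logf}\sup_{\mu\in E(X,T)}\sup_\UU h_\mu(T,\UU)$ between two quantities each equal to $\overline{\rm mdim}_M(T,X,d)$, which is the asserted variational principle.

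For (2), the inequality $\ge$ is trivial. For $\le$, I would fix $\mu\in M(X,T)$ with ergodic decomposition $\mu=\int_{E(X,T)}m\,d\tau(m)$ and set $g(\nu,\epsilon):=\sup_{\diam(\UU)\le\epsilon,\,\Leb(\UU)\ge\epsilon/4}h_\nu(T,\UU)$, so that $F(\nu,d)=\limsup_{\epsilon\to0}g(\nu,\epsilon)/\logf$. By Lemma \ref{lem 2.1}(2), $h_\mu(T,\UU)=\int h_m(T,\UU)\,d\tau(m)\le\int g(m,\epsilon)\,d\tau(m)$ for each admissible $\UU$, hence $g(\mu,\epsilon)\le\int g(m,\epsilon)\,d\tau(m)$. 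Since $\diam(\alpha)\le\Leb(\UU)$ forces $\alpha\succ\UU$, we have $g(m,\epsilon)\le\inf_{\diam(\alpha)\le\epsilon/4,\,\alpha\in\mathcal{P}_X}h_m(T,\alpha)$, which is a candidate in $\mathcal{E}$; so the variational principle in Lemma \ref{lem 2.1}(3) together with $\overline{\rm mdim}_M(T,X,d)<\infty$ furnishes $\epsilon_0>0$ and $C>0$ with $g(m,\epsilon)/\logf\le C$ for all $m\in E(X,T)$ and $0<\epsilon<\epsilon_0$. The reverse Fatou lemma (with dominating constant $C$) then gives
\begin{align*}
F(\mu,d)=\limsup_{\epsilon\to0}\frac{g(\mu,\epsilon)}{\logf}&\le\limsup_{\epsilon\to0}\int\frac{g(m,\epsilon)}{\logf}d\tau(m)\le\int\limsup_{\epsilon\to0}\frac{g(m,\epsilon)}{\logf}d\tau(m)\\
&=\int_{E(X,T)}F(m,d)d\tau(m)\le\sup_{m\in E(X,T)}F(m,d),
\end{align*}
and taking $\sup_{\mu\in M(X,T)}$ finishes (2).

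The delicate step is the first inequality of (1): it is genuinely an ergodic fact (resting on Shannon--McMillan--Breiman, equivalently on the counting estimates of Katok and Shapira), and for a non-ergodic $\mu$ the left side $\overline{h}_\mu^K(T,2\epsilon)$ detects the essential supremum of the ergodic components' local entropies rather than their $\tau$-average, which is why I restrict (1) to $\mu\in E(X,T)$. In (2) the crux is the interchange of $\limsup_{\epsilon\to0}$ with the ergodic-decomposition integral; this is exactly where finiteness of $\overline{\rm mdim}_M(T,X,d)$ is used, via the uniform bound on $g(m,\epsilon)/\logf$ and the reverse Fatou lemma. The measurability of $m\mapsto g(m,\epsilon)$ and $m\mapsto F(m,d)$ needed to form these integrals is routine from the affineness and upper semicontinuity in Lemma \ref{lem 2.1}(2) once the relevant suprema and infima are reduced to countable subfamilies.
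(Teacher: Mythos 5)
Your argument is correct and follows essentially the same route as the paper: for part (1), Katok-vs-Shapira comparison of counting functions, with the consequences dispatched by Lemma \ref{lem 2.1}(3); for part (2), ergodic decomposition of local cover entropy (Lemma \ref{lem 2.1}(2)) followed by a reverse Fatou estimate, with finiteness of $\overline{\rm mdim}_M(T,X,d)$ supplying the dominating bound. Two small differences are worth noting. First, for the first inequality in (1) you run Shannon--McMillan--Breiman directly, while the paper just compares $R_\mu^\delta(T,n,2\epsilon)\le N_\mu(\UU^n,\delta)$ and invokes Shapira's theorem via Lemma \ref{lem 2.1}(1); the two are equivalent, but yours makes the ergodicity dependence explicit, and you are right that the chain in (1) should be read for $\mu\in E(X,T)$ only---$h_\mu^S$ is only defined for ergodic $\mu$, and the first inequality genuinely fails for non-ergodic $\mu$ because $\overline{h}_\mu^K$ sees the supremum of the components while $h_\mu(T,\UU)$ averages them. (The paper's ``for every $\mu\in M(X,T)$'' is an overstatement, but nothing downstream uses more than the ergodic case.) Second, in (2) you integrate the function $g(m,\epsilon)=\sup_{\UU}h_m(T,\UU)$, whereas the paper fixes a realizing sequence $\{\UU_k\}$ and integrates $h_m(T,\UU_k)$; the latter sidesteps the measurability-of-a-sup question that you acknowledge but hand-wave (and your ``reduction to countable subfamilies'' would need a genuine argument, since the admissible covers form an uncountable family and $\UU\mapsto h_m(T,\UU)$ is not obviously separable). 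This is a cosmetic gap; the safest fix is to do exactly what the paper does: extract $\epsilon_k\to 0$ and $\UU_k$ with $F(\mu,d)=\lim_k h_\mu(T,\UU_k)/\log(1/\epsilon_k)$, and apply reverse Fatou to the sequence of measurable (indeed affine, upper semicontinuous) functions $m\mapsto h_m(T,\UU_k)/\log(1/\epsilon_k)$.
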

	\begin{proof}
		(1).
		By  Lemma \ref{lem 2.1} (1), we have $h_{\mu}(T, \UU)= h_\mu^S(\mathcal{U})$. Take a   finite open cover $\UU$   of $X$  with  $\diam (\UU)\leq \epsilon(<2\epsilon)$ and  $\Leb(\UU)\geq \frac{\epsilon}{4}$. By comparing the definitions,  we have 
		$$\overline{h}_\mu^{K}(T,2\epsilon) \leq \sup_{\diam (\UU) \leq \epsilon, \Leb(\UU)\geq \frac{\epsilon}{4}}h_\mu^S(\mathcal{U})
		\leq \overline{h}_\mu^{K}(T,\frac{\epsilon}{4}).$$
		
		The remaining statements hold by Lemma \ref{lem 2.1} (3).
		
		(2). Assume that  $\overline{\rm mdim}_M(T,X,d)<\infty$. It suffices to  verify that $$\sup_{\mu \in M(X,T)}F(\mu,d)
		\leq \sup_{\mu \in E(X,T)}F(\mu,d).$$
		
		Now fix $\mu \in M(X,T)$ and let $\mu=\int_{E(X,T)}md\tau (m)$ be the ergodic decomposition of $\mu$, where $\tau$ is a (unique) Borel probability measure on $E(X,T)$. We
		choose a sequence  $\{\epsilon_k\}$ of positive real numbers satisfying $\epsilon_k \to 0$ as $k\to \infty$ and a family $\{\UU_k\}$ of  finite open covers of $X$  with  $\diam (\UU_k)\leq \epsilon_k$ and $\Leb(\UU_k)\geq \frac{\epsilon_k}{4}$ such that
		$$F(\mu,d)=\lim_{k \to \infty}\frac{1}{\log \frac{1}{\epsilon_k}}h_{\mu}(T,\UU_k).$$ Without loss of generality, by (1) we may require  that for the sequences $\{\epsilon_k\}$ and $\{\UU_k\}$, it holds that for all $m \in E(X.T)$,
		$$\frac{1}{\log \frac{1}{\epsilon_k}}h_{m}(T,\UU_k)<\overline{\rm mdim}_M(T,X,d)+1<\infty.$$ Let $\gamma >0$ and $A_{\gamma}:=\min\{F(\mu,d)-\gamma, \frac{1}{\gamma}\}$. Then
		\begin{align*}
			A_{\gamma}&<\lim_{k \to \infty}\frac{1}{\log \frac{1}{\epsilon_k}}\int_{E(X,T)}h_{m}(T,\UU_k)
			d\tau(m) ~~ \text{by  Lemma \ref{lem 2.1} (2)}\\
			&=\limsup_{k \to \infty}\int_{E(X,T)}\frac{1}{\log \frac{1}{\epsilon_k}}h_{m}(T,\UU_k)
			d\tau(m)\\
			&\leq \int_{E(X,T)}\limsup_{k \to \infty} \frac{1}{\log \frac{1}{\epsilon_k}}h_{m}(T,\UU_k)
			d\tau(m),
		\end{align*}
		where we used a variant of the classical Fatou's lemma\footnote[5]{We present it  for the sake of readers: let $\{f_n\}$ be a sequence of measurable functions on a probability  space $(\Omega, \mathcal{F},\mathbb{P})$. If for some integrable function  $g: \Omega \rightarrow \mathbb{R}$,  for every $n\geq 1$, $f_n\leq g$ for $\mathbb{P}$-a.e. $\omega\in \Omega$, then
			$$\limsup_{n \to \infty}\int_{\Omega} f_n d \mathbb{P}\leq  \int_{\Omega} \limsup_{n \to \infty} f_nd\mathbb{P}.$$} for the  inequality.   
		Hence, there exists  $m\in E(X,T)$ such that $A_{\gamma}<F(m,d) \leq  \sup_{\mu \in E(X,T)}F(\mu,d).$
		Letting $\gamma \to 0$, since $\mu$ is arbitrary,  we get the desired inequality. 
	\end{proof}

	Motivated by the proof of Lemma \ref{lem 3.7}, we take this opportunity to establish an analogous ergodic decomposition theorem for the mean R\'enyi information dimensions and their variants defined by  the following candidate set of measure-theoretic $\epsilon$-entropy: 
	\begin{align*}
		\mathcal{E}_1= \left\{
		{
			\displaystyle
			{
				\inf_{\diam (\alpha) < \epsilon, \atop \alpha \in \mathcal{P}_{X}}\limits  h_{\mu}(T, \alpha),\quad
				\inf_{\diam (\alpha) \leq \epsilon, \atop \alpha \in \mathcal{P}_{X}}\limits h_{\mu}(T, \alpha)
				\atop
				\inf_{\diam (\alpha)\leq \epsilon,\ \mu(\partial \alpha)=0 \atop \alpha \in \mathcal{P}_{X}}\limits h_{\mu}(T, \alpha),\quad
				\inf_{\diam (\mathcal{U}) \leq \epsilon} \limits h_{\mu}(T, \mathcal{U})
			}
			\atop
			\sup_{\diam (\mathcal{U}) \leq \epsilon,\ \mathrm{Leb}\,(\mathcal{U}) \geq \frac{\epsilon}{4}}\limits h_{\mu}(T, \mathcal{U}),\quad
			\sup_{\mathrm{Leb}\,(\mathcal{U}) \geq \epsilon }\limits  h_{\mu}(T, \mathcal{U})
		}
		\right\}
	\end{align*}
	In addition,  for $\mathbb{R}^d$-actions, Tsukamoto proposed the notion of rate-distortion dimension in \cite{t25a}, wherein a double inequality connecting mean dimension and rate-distortion dimension is  established, and the corresponding ergodic decomposition theorems for this dimension are presented in \cite[Theorems 1.1-1.2]{t25b}.
	
	\begin{thm}
		Let  $(X,d,T)$ be a TDS and   $\mu \in M(X,T)$, and let  $\mu=\int_{E(X,T)} m\tau (m)$ be the ergodic decomposition of $\mu$, where   $\tau$ is a unique Borel probability measure on $E(X,T)$. Then for every $h_{\mu}(T,\epsilon) \in \mathcal{E}_1$,
		\begin{itemize}
			\item [(1)] $\int_{E(X,T)}\liminf_{\epsilon \to 0} \limits \frac{h_{\mu}(T, \epsilon)}{\logf}\tau(m) \leq  \liminf_{\epsilon \to 0} \limits \frac{h_{\mu}(T,\epsilon)}{\logf}$.
			\item [(2)]  If   $\overline{\rm mdim}_M(T,X,d)<\infty$, then $$\int_{E(X,T)}\limsup_{\epsilon\to 0} \limits\frac{h_{m}(T,\epsilon)}{\log\frac{1}{\epsilon}} \tau(m) \geq  \limsup_{\epsilon \to 0} \limits \frac{h_{\mu}(T,\epsilon)}{\logf}.$$
		\end{itemize}
		Consequently, if  $\limsup_{\epsilon \to 0} \limits \frac{h_{\mu}(T, \epsilon)}{\logf}=\liminf_{\epsilon \to 0} \limits \frac{h_{\mu}(T, \epsilon)}{\logf}$ for $\tau$-a.e. $m\in E(X,T)$,  we have $$\lim_{\epsilon \to 0} \limits \frac{h_{\mu}(T,\epsilon)}{\logf}=\int_{E(X,T)}\lim_{\epsilon \to 0} \limits \frac{h_{\mu}(T, \epsilon)}{\logf}\tau(m).$$
		
	\end{thm}
	
	\begin{proof}
		
		By  \cite[Lemma 2.3 (3)]{fw16}, for every $\epsilon>0$, 
		$$\mu \in M(X,T) \mapsto h_{\mu}(T,\epsilon):=\inf_{
			\diam (\alpha)< \epsilon,  \alpha \in \mathcal{P}_{X}} \limits h_{\mu}(T,\alpha)$$
		is  upper semi-continuous, and hence is Borel measurable.  
		Putting $t=\logf$, it holds that  for every $\mu \in M(X,T)$,
		\begin{align*}
			\limsup_{\epsilon \to 0}\frac{h_{\mu}(T,\epsilon)}{\logf}=\limsup_{t \to \infty}\frac{h_{\mu}(T, e^{-t})}{t}=\limsup_{\mathbb{N} \ni n \to \infty}\frac{h_{\mu}(T, e^{-n})}{n},
		\end{align*}
		where we used the fact $h_{\mu}(T,e^{-t_1})\geq h_{\mu}(T,e^{-t_2})$ if $t_1\geq t_2$ for the second equality. Moreover, the equality  is still  valid  by changing the upper limits into the lower limits. Therefore,  we deduce that the maps
		\begin{align*}
			&\mu \in M(X,T)\mapsto \limsup_{\epsilon \to 0}\frac{h_{\mu}(T,\epsilon)}{\logf},\\
			&\mu \in M(X,T)\mapsto \liminf_{\epsilon \to 0}\frac{h_{\mu}(T,\epsilon)}{\logf},
		\end{align*}  
		are Borel measurable. It further verifies the involved  measurability  for the other candidates in $\mathcal{E}_1$ by  two inequalities.  One holds for all $\mu\in M(X,T)$ and $\varepsilon>0$,
		$$h_{\mu}(T,2\epsilon) \leq \inf_{
			\diam  (\alpha) \leq \epsilon, \atop \alpha \in \mathcal{P}_{X}}\limits h_{\mu}(T, \alpha) \leq h_{\mu}(T,\epsilon).$$
		The other follows from \cite[Lemma 3.23 (1)]{y25}, stating that   for every $\mu \in M(X,T)$ and $\epsilon >0$, 
		\begin{align}\label{inequu 3.11}
			\inf_{
				\diam  (\alpha) \leq \epsilon, \atop \alpha \in \mathcal{P}_{X}}\limits h_{\mu}(T, \alpha)&\leq \inf_{
				\diam (\alpha)\leq \epsilon,  \mu(\partial \alpha)=0 \atop \alpha \in \mathcal{P}_{X}} \limits h_{\mu}(T,\alpha) \leq \inf_{\diam (\UU) \leq \epsilon} \limits  h_{\mu}(T,\UU) \nonumber\\
			&\leq  \sup_{\diam (\UU) \leq \epsilon,  \Leb (\UU) \geq \frac{\epsilon}{4} }\limits h_{\mu}(T,\UU) \leq  \sup_{\Leb (\UU) \geq \frac{\epsilon}{4} } \limits h_{\mu}(T,\UU) \nonumber\\
			&\leq \inf_{
				\diam (\alpha)\leq \frac{\epsilon}{8}}\limits h_{\mu}(T,\alpha).
		\end{align}

		(1). In view of \eqref{inequu 3.11}, it suffices to prove  that the ergodic decomposition theorem holds for $$\inf_{
			\diam  (\alpha) < \epsilon, \atop \alpha \in \mathcal{P}_{X}}\limits h_{\mu}(T, \alpha).$$ Choose a decreasing  sequence  $\epsilon_n \to 0$ as $n \to \infty$ and  a sequence  $\{\alpha_n\}$ of finite Borel partitions of $X$ with $\diam(\alpha_n)\leq \epsilon_n$ such that 
		\begin{align}\label{eq 3.11}
			\lim_{n\to \infty}\frac{1}{\log\frac{1}{\epsilon_n}} h_{\mu}(T, \alpha_n)=\liminf_{\epsilon \to 0}\frac{1}{\logf}\inf_{\diam (\alpha)\leq \epsilon}\limits h_{\mu}(T, \alpha).
		\end{align}	 
		We show that  the entropy map
		$$\mu \in M(X,T)\mapsto h_{\mu}(T,\alpha)$$
		is Borel measurable. 
		
		Recall that $ h_{\mu}(T,\alpha)=\lim_{n \to \infty}\frac{1}{n}H_{\mu}(\alpha^n)$ for every $\mu \in M(X,T)$.  It follows from the definition of measure-theoretic entropy that we only need to show
		for every $A \in \mathcal{B}(X)$,
		$$f_A:\mu \in M(X)\mapsto \mu(A)$$
		is Borel  measurable. We put
		$$\mathcal{F}:=\{A\in \mathcal{B}(X): f_A~\text{is  measurable w.r.t.} ~  \mathcal{B}(M(X))\}.$$
		
		Clearly,  the weak$^{*}$-topology on $M(X)$ implies that  $\mathcal{F}$ contains the  family $\mathcal{S}$  of all open subsets of $X$. Then $\mathcal{S}$  is a $\pi$-system in the sense that  the intersection of any two open sets of $X$ still lies in $\mathcal{S}$. Furthermore,  $\mathcal{F}$ is a $\lambda$-system in the following sense:
		\begin{itemize}
			\item [(1)] obviously, $X\in \mathcal{F}$;
			\item [(2)] if $A, B\in \mathcal{F}$ and $A\subset B$, then
			$$f_{B\backslash A}(\mu)=\mu(B\backslash A)=f_B(\mu)-f_A(\mu),$$
			which implies that $B\backslash A\in \mathcal{F}$;
			\item [(3)] if $A_n\in \mathcal{F}$ and $A_n\subset A_{n+1}$ for every $n\geq 1$, then
			$$f_{\cup_{n\geq 1}A_n}(\mu)=\lim_{n\to \infty}\mu(A_n)=\lim_{n\to \infty}f_{A_n}(\mu).$$
			This implies that $\cup_{n\geq 1}A_n \in \mathcal{F}$.  
		\end{itemize} 
		The Dynkin's $\pi-\lambda$ Theorem\footnote[6]{It says that for any $\pi$-system $\mathcal{C}$ and $\lambda$-system $\mathcal{D}$ in a measurable space $S$, if $\mathcal{C}\subset \mathcal{D}$,  we have $\sigma(\mathcal{C})\subset \mathcal{D}$.} (cf.  \cite[Theorem 1.1]{kal21}) implies that
		$$\mathcal{B}(X)=\sigma(\mathcal{S})\subset \mathcal{F}\subset  \mathcal{B}(X).$$
		
		The Jacobs's theorem  (cf. \cite[Theorem 8.4]{w82}) yields that for every $n\geq 1$, $$h_{\mu}(T,\alpha_n)= \int_{E(X,T)} h_{m}(T,\alpha_n) d\tau(m).$$
		Using (\ref{eq 3.11}) and the  Fatou's lemma, we obtain  that
		\begin{align*}
			&\liminf_{\epsilon \to 0}\frac{1}{\logf}\inf_{\diam (\alpha)\leq \epsilon}\limits h_{\mu}(T, \alpha)\\ 
			=&  \liminf_{n \to \infty}\int_{E(X,T)} \frac{1}{\log\frac{1}{\epsilon_n}} h_{m}(T, \alpha_n) d\tau(m),\\
			\geq& \int_{E(X,T)} \liminf_{n \to \infty}\frac{1}{\log\frac{1}{\epsilon_n}} h_{m}(T, \alpha_n) d\tau(m),\\
			\geq& \int_{E(X,T)} \liminf_{\epsilon \to 0}\frac{1}{\logf}\inf_{\diam (\alpha)\leq \epsilon}\limits h_{m}(T, \alpha) d\tau(m),
		\end{align*}
		which implies  the desired inequality.

		(2).  It follows from the proof of Lemma \ref{lem 3.7} (2). 
	\end{proof}
	
	Using Lemma \ref{lem 3.7}, we prove Theorem \ref{thm 1.3}.
	
	\begin{proof}[Proof of Theorem \ref{thm 1.3}]
		We divide the proof into two steps:
		
		\emph{Step 1.} For  every  $d\in \mathcal{D}^{'}(X)$ and  $h_{\mu}(T,\epsilon)\in \mathcal{E}\cup \{R_{\mu, L^p}(\epsilon)\}$, we have
		$$\mdim(T,X)\leq  \sup_{\mu \in E(X,T)}\{ \limsup_{\epsilon \to 0}\frac{1}{\logf}h_{\mu}(T,\epsilon)\}.$$
		
		Fix  a metric $d\in \mathcal{D}^{'}(X)$. By \cite[Lemma 3.10]{lt19},  there exists  a   metric $d^{'} (\leq d)\in  \mathcal{D}(X)$   admitting  the tame growth of covering numbers. The known results in \cite[Proposition 3.2, Theorem 3.11]{lt19} imply that
		\begin{align}\label{ineq 3.11}
			\mdim(T,X)\leq  \sup_{\mu \in M(X,T)}\overline{\rm {rdim}}_{L^1}(X,T,d^{'},\mu).
		\end{align}
		Recall that in (\ref{equ 2.1}) and (\ref{equ 2.2}),   it holds that for every   $\mu \in M(X,T)$ and $\epsilon >0$, 
		\begin{align*}
			R_{\mu, L^1}(2\epsilon)\leq &\inf_{\diam  (\alpha, d^{'}) \leq \epsilon, \atop \alpha \in \mathcal{P}_{X}}h_\mu(T,\alpha)\\
			 \leq&  \sup_{\diam (\UU, d^{'})\leq \epsilon,  \Leb (\UU, d^{'})\geq \frac{\epsilon}{4}}h_{\mu}(T,\UU).
		\end{align*}
		Together with  (\ref{ineq 3.11}) and Lemma \ref{lem 3.7} (2), we obtain 	
		\begin{align}\label{ineq 3.13}
			\mdim(T,X)\leq & \sup_{\mu \in E(X,T)}F(\mu,d^{{'}}) \nonumber\\
			=&\sup_{\mu \in E(X,T)}\overline{\rm {rdim}}_{L^\infty}(X,T,d^{'},\mu),
		\end{align}
		where the equality holds by  Lemma \ref{lem 2.1} (3). The  tame growth of covering numbers of $d^{'}$   \cite[Theorem 1.7]{w21}  ensures  that for every $\mu \in  E(X,T)$,
	\begin{align*}
	\overline{\rm {rdim}}_{L^\infty}(X,T,d^{'},\mu)=&\overline{\rm {rdim}}_{L^1}(X,T,d^{'},\mu) \\
	\leq& \overline{\rm {rdim}}_{L^1}(X,T,d,\mu)\\
	 \leq& \overline{\rm {rdim}}_{L^\infty}(X,T,d,\mu).
	\end{align*}
		Using  Lemma  \ref{lem 2.1} (3) again and the inequality (\ref{ineq 3.13}), we complete  the step 1.

		\emph{Step 2.}
		By Step 1, for every $h_{\mu}(T,\epsilon)\in \mathcal{E}\cup \{R_{\mu, L^p}(\epsilon)\}$  we have
		\begin{align}\label{inequ 3.10}
			\mdim(T,X)&\leq \inf_{d\in \mathcal{D}^{'}(X)} \sup_{\mu \in E(X,T)}\{ \limsup_{\epsilon \to 0}\frac{1}{\logf}h_{\mu}(T,\epsilon)\}\\
			&\leq \inf_{d\in \mathcal{D}^{'}(X)} \{ \limsup_{\epsilon \to 0}\frac{1}{\logf}\sup_{\mu \in E(X,T)} h_{\mu}(T,\epsilon)\}\nonumber\\
			&\leq  \inf_{d\in \mathcal{D}^{'}(X)} \over~~\text{by Lemma \ref{lem 2.1} (3)} \nonumber \\ 
			&=  \min_{d\in \mathcal{D}^{'}(X)} \over \nonumber=\mdim(T,X)~\text{by (\ref{equ 2.4})} \nonumber,
		\end{align}
		where   we used ``$\leq$" to include  $R_{\mu, L^p}(\epsilon)$ for the third inequality. Furthermore,  $\sup_{\mu\in E(X,T)}$ can be replaced by  $\sup_{\mu\in M(X,T)}$ for  (\ref{inequ 3.10}).  
		
		This completes the proof by  Steps 1 and 2.
	\end{proof}
	
	\subsection{Further discussion on variational principles of metric mean dimension}
	
	In this subsection,  using Theorem \ref{thm 1.3} we  briefly discuss the variational principles of metric mean dimension.
	
	\subsubsection{Variational principle for lower Brin-Katok's $\epsilon$-entropy}
	
	In \cite[Problem 1]{shi}, Shi asked whether    $\underline{h}_{\mu}^{BK}(T, \epsilon)$ can be included in  $\mathcal{E}$ presented in  Lemma \ref{lem 2.1} (3). In \cite[Theorem 1.3]{ycz25}, the authors verified that for any TDS $(X,d,T)$, the variational principle  for metric mean dimension is valid for the lower Brin-Katok's $\epsilon$-entropy in terms of  Borel probability measures:
	\begin{align}\label{equ 3.14}
		\over=\limsup_{\epsilon \to 0}\frac{1}{\logf}\sup_{\mu \in M(X)}\underline{h}_{\mu}^{BK}(T,\epsilon).
	\end{align}
	
	We show that for certain dynamical systems, the supremum in \eqref{equ 3.14} can range over the set of invariant measures. This is achieved by  a geometric Frostman's lemma, originally proved for compact subsets of $\mathbb{R}^n$
	and extended to any compact metric space in \cite[Corollary 4.4]{lt19}. which is stated as follows:
	
	\begin{lem}\label{lem 3.2}
		Let $(X,d)$ be a compact metric space. For any $0<c<1$, there exists $\epsilon_0 \in (0,1)$ such that   for any $0< \epsilon \leq \epsilon_0$, there exists $\mu \in M(X)$ satisfying 
		$$\mu(E)\leq \diam(E)^{c\cdot {\rm dim}_H(X,d,\epsilon)}~~~ \forall ~E \subset X~\text{with}~\diam(E)<\frac{\epsilon}{6}.$$
	\end{lem}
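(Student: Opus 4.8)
The plan is to derive Lemma~\ref{lem 3.2} from the classical mass-distribution principle, realised through a max-flow--min-cut argument on a refining tree of Borel cells of $X$, while keeping the scale $\epsilon$ under control; this is exactly how Frostman's lemma is proved for Euclidean sets, and it is also the content of \cite[Corollary 4.4]{lt19}. Here ${\rm dim}_H(X,d,\epsilon)$ is the scale-$\epsilon$ Hausdorff dimension: with $\mathcal{H}^s_\delta(X)=\inf\{\sum_i(\diam U_i)^s:X=\bigcup_i U_i,\ \diam U_i<\delta\}$ denoting the $\delta$-restricted $s$-content, ${\rm dim}_H(X,d,\epsilon)$ is the critical exponent at which $\mathcal{H}^s_\epsilon(X)$ drops below $1$, and it increases to ${\rm dim}_H(X,d)$ as $\epsilon\to0$ since $\mathcal{H}^s_\delta(X)\uparrow\mathcal{H}^s(X)$. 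The case ${\rm dim}_H(X,d)=0$ is vacuous, so assume it is positive. Fix $c<c'<1$; for $\epsilon$ small enough ${\rm dim}_H(X,d,\epsilon)>0$, and putting $s:=c'\,{\rm dim}_H(X,d,\epsilon)<{\rm dim}_H(X,d,\epsilon)$ the definition yields $\mathcal{H}^s_{\epsilon/6}(X)\ge\mathcal{H}^s_\epsilon(X)\ge1$.

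I would then construct a sequence of finite Borel partitions $\mathcal{P}_1,\mathcal{P}_2,\dots$ of $X$, each refining the previous one, with $\diam\mathcal{P}_1<\epsilon/6$ and mesh tending to $0$, adapted to a nested family of $\delta_n$-nets (greedy disjointification of finite ball covers); organise the nonempty atoms into a rooted tree with root $X$, the atoms of $\mathcal{P}_n$ at level $n$, and parent equal to the containing atom of $\mathcal{P}_{n-1}$, and assign to a cell $v$ the capacity $(\diam v)^s$. A cut of the tree truncated at level $N$ is a partition of $X$ into atoms of diameter $<\epsilon/6$, hence has capacity $\ge\mathcal{H}^s_{\epsilon/6}(X)\ge1$; by the finite max-flow--min-cut theorem there is a probability measure $\mu_N$ on the level-$N$ atoms with $\mu_N(v)\le(\diam v)^s$ for all cells $v$ of level $\le N$, and any weak-$*$ limit point $\mu\in M(X)$ satisfies $\mu(v)\le(\diam v)^s$ for every cell $v$ of the tree.

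It remains to pass from tree cells to an arbitrary Borel set $E$ with $\diam E<\epsilon/6$: choosing $n$ with the mesh of $\mathcal{P}_n$ comparable to $\diam E$, the set $E$ meets a number of atoms of $\mathcal{P}_n$ bounded in terms of $s$ only (this is where the net-adapted construction is needed), which gives $\mu(E)\le C_s(\diam E)^s$ with $C_s\le 8^s$, say; and since $\diam E<\epsilon/6<1$ while the target exponent is $c\,{\rm dim}_H(X,d,\epsilon)$ and $s=c'\,{\rm dim}_H(X,d,\epsilon)$, the bound $C_s(\diam E)^s\le(\diam E)^{c\,{\rm dim}_H(X,d,\epsilon)}$ holds once $\log(6/\epsilon)\ge c'\log 8/(c'-c)$, i.e.\ for all $\epsilon\le\epsilon_0:=6\cdot 8^{-c'/(c'-c)}$, a threshold depending only on $c$. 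I expect the genuine difficulty to lie precisely in this last step: in a non-doubling compact metric space a small set can meet many atoms of a fine partition, so the partitions must be built carefully (along nested nets and their Voronoi-type cells) to keep the overlap constant of the form $8^s$ rather than something blowing up with $s$ --- which is exactly what lets one choose a single $\epsilon_0$ even when ${\rm dim}_H(X,d)=\infty$, as for the Hilbert cube. The max-flow--min-cut step itself is routine given the tree, using the weak-$*$ compactness of $M(X)$ and the fact that tree cuts project to covers of $X$.
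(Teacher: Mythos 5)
The paper gives no proof of Lemma \ref{lem 3.2}: it is imported verbatim from \cite[Corollary 4.4]{lt19}, so the comparison is against the argument there.

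Your tree/max-flow plan is the classical Euclidean Frostman argument, and the first two steps (choosing $s=c'\,{\rm dim}_H(X,d,\epsilon)$ so that $\mathcal H^s_{\epsilon/6}(X)\ge 1$, building nested partitions, obtaining $\mu$ with $\mu(v)\le(\diam v)^s$ on all tree cells via min-cut plus weak$^{*}$ compactness) are fine. The gap is exactly in the step you flag, and it is not reparable the way you suggest. You claim that with a careful net-adapted construction a set $E$ with $\diam E$ at the mesh scale meets at most $8^s$ cells. In a non-doubling compact metric space this is false for every choice of nets: the number of pairwise $\rho$-separated points inside a $2\rho$-ball is finite by compactness but unbounded as $\rho\to 0$, and it does not admit any bound of the form $f(s)$. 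The application that matters here is precisely the non-doubling case: in Theorem \ref{thm 3.6} the lemma is invoked on $X^{\mathbb Z}$ (e.g.\ the Hilbert cube). There the overlap count at scale $\rho$ grows like $\rho^{-\Theta(\log(1/\rho))}$, which outpaces the slack you create by lowering the exponent from $c'\,{\rm dim}_H(X,d,\epsilon)$ to $c\,{\rm dim}_H(X,d,\epsilon)$: that slack only absorbs a factor of order $(\diam E)^{-(c'-c){\rm dim}_H(X,d,\epsilon)}$, while the true overlap constant at scale $\diam E\to 0$ (with $\epsilon$ fixed) is eventually larger. So the passage from tree cells to arbitrary $E$ collapses, and with it the choice of a single $\epsilon_0$ depending only on $c$.

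The proof of \cite[Corollary 4.4]{lt19} sidesteps this entirely. It rests on a Frostman-type statement (\cite[Lemma 4.2]{lt19}, in the spirit of Howroyd's theorem) producing, whenever $\mathcal H^s_\epsilon(X)>0$, a Borel probability measure $\nu$ with $\nu(E)\le \mathcal H^s_\epsilon(X)^{-1}(\diam E)^s$ directly for \emph{all} Borel $E$ with $\diam E<\epsilon/6$; the construction is functional-analytic (a weighted Hausdorff content on $C(X)^{+}$ together with a Hahn--Banach/min-max argument and Riesz representation), so no bounded-overlap lemma is needed and no doubling hypothesis enters. Lemma \ref{lem 3.2} then follows by taking $s=c\,{\rm dim}_H(X,d,\epsilon)<{\rm dim}_H(X,d,\epsilon)$, for which $\mathcal H^s_\epsilon(X)\ge 1$. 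If you want to keep a combinatorial flavour, the min-max step can still be cast in max-flow language, but the ``nodes'' must be arbitrary small sets with fractional weights rather than a fixed geometric tree, so that no overlap estimate is ever required.
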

	
	Let  ${\rm \overline{dim}}_B(X,d)$ denote the upper box dimension of $X$. Recall that  the Hausdorff dimension of $X$ is defined by ${\rm dim}_H(X,d)=\lim_{\epsilon \to 0}{\rm dim}_H(X,d,\epsilon)$, where
	\begin{align*} 
		{\rm dim}_H(X,d,\epsilon):&=\inf \{s>0: \mathcal{H}_\epsilon^s(X,d)=0\}\\
		&=\sup \{s>0: \mathcal{H}_\epsilon^s(X,d)=\infty\},		
	\end{align*}
	and $$\mathcal{H}_\epsilon^s(X,d):=\inf\{\sum_{n=1}^{\infty} r_n^s: X=\bigcup_{n=1}^{\infty} B_d(x_n,r_n) ~\atop 
	\qquad 	\qquad	\qquad \text{with}~x_n\in X, r_n <\epsilon ~\forall n \geq 1\}.$$ 
	Pontrjagin and  Schnirelmann\footnote[7]{See \cite[Theorem 5.1]{lt19} for an available proof.}  \cite{ps32} proved that for any  compact metrizable space $X$, there exists a metric $d\in \mathcal{D}(X)$ satisfying ${\rm dim}_H(X,d)={\rm \overline{dim}}_B(X,d)$.
	\begin{thm}\label{thm 3.6}
		Let $X$ be a compact metrizable  space and $d$ be a compatible metric such that ${\rm dim}_H(X,d)={\rm \overline{dim}}_B(X,d)$. Then
		\begin{align*}
			{\rm \overline{mdim}}_M(\sigma,X^{\mathbb{Z}},d^{\mathbb{Z}})&=\limsup_{\epsilon \to 0}\frac{1}{\logf}\sup_{\mu \in E(X^{\mathbb{Z}},\sigma)}\underline{h}_{\mu}^{BK}(T,\epsilon)\\
			&= \limsup_{\epsilon \to 0}\frac{1}{\logf}\sup_{\mu \in M(X^{\mathbb{Z}},\sigma)}\underline{h}_{\mu}^{BK}(T,\epsilon),
		\end{align*}
		where $d^{\mathbb{Z}}(x,y)=\sum_{n\in \mathbb{Z}}\frac{d(x_n,y_n)}{2^{|n|}}.$
		
		The results are also valid for ${\rm \underline{mdim}}_M(\sigma,X^{\mathbb{Z}},d^{\mathbb{Z}})$ by changing $\limsup_{\epsilon \to 0}$ into $\liminf_{\epsilon \to 0}$.
	\end{thm}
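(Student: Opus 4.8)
\textbf{Proof proposal for Theorem \ref{thm 3.6}.}

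The plan is to deduce the theorem by combining three ingredients already assembled in the excerpt: the variational principle over Borel probability measures \eqref{equ 3.14}, the reduction to ergodic measures coming from Theorem \ref{thm 1.3} (together with Lemma \ref{lem 3.7}), and the geometric Frostman-type lower bound of Lemma \ref{lem 3.2}, all applied to the product system $(X^{\mathbb Z},\sigma)$ with metric $d^{\mathbb Z}$. First I would observe that the product system $(X^{\mathbb Z},\sigma)$ has the marker property: it contains an aperiodic minimal subsystem, or more directly one can invoke that shifts over a compact metrizable space satisfy the marker property (cf.\ \cite{l99,lw00}). Since $\mdim(\sigma,X^{\mathbb Z})<\infty$ fails in general, one must be slightly careful here; instead I would proceed using Lemma \ref{lem 3.7},(2) directly, which only requires $\overline{\rm mdim}_M(\sigma,X^{\mathbb Z},d^{\mathbb Z})<\infty$. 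So the first real step is to verify that, under the hypothesis ${\rm dim}_H(X,d)={\rm \overline{dim}}_B(X,d)$ (hence finite, since $X$ is a compact metrizable space and any such space has finite covering dimension only if\ldots — actually $\overline{\rm dim}_B$ may be infinite; I would therefore split into the case where it is infinite, handled separately below, and the finite case), the upper metric mean dimension $\overline{\rm mdim}_M(\sigma,X^{\mathbb Z},d^{\mathbb Z})$ is finite, which follows from the standard estimate $\overline{\rm mdim}_M(\sigma,X^{\mathbb Z},d^{\mathbb Z})\le \overline{\rm dim}_B(X,d)$.

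For the finite case, the chain of inequalities I would assemble is: on one hand,
\begin{align*}
\limsup_{\epsilon \to 0}\frac{1}{\logf}\sup_{\mu \in E(X^{\mathbb Z},\sigma)}\underline{h}_{\mu}^{BK}(T,\epsilon)
\le \limsup_{\epsilon \to 0}\frac{1}{\logf}\sup_{\mu \in M(X^{\mathbb Z},\sigma)}\underline{h}_{\mu}^{BK}(T,\epsilon)
\le \overline{\rm mdim}_M(\sigma,X^{\mathbb Z},d^{\mathbb Z}),
\end{align*}
where the last inequality is the easy direction of \eqref{equ 3.14} (it holds with $M(X^{\mathbb Z})$ and a fortiori with the smaller set $M(X^{\mathbb Z},\sigma)$). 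For the reverse, I would use Lemma \ref{lem 3.7},(1): since $\underline h_\mu^{BK}(T,\epsilon)\ge \underline h_\mu^{K}(T,\epsilon)$ — actually the cleaner route is to note that for ergodic $\mu$ one has the sandwich $\overline h_\mu^{K}(T,2\epsilon)\le \sup_{\diam(\UU)\le\epsilon,\ \Leb(\UU)\ge\epsilon/4}h_\mu(T,\UU)\le \overline h_\mu^{K}(T,\epsilon/4)$, and $\underline h_\mu^{BK}(T,\epsilon)$ has the same divergent rate as any element of $\mathcal E$ by Lemma \ref{lem 2.1},(3) combined with the fact (from \cite{ycz25}) that $\underline h^{BK}_\mu$ may be added to the candidate set for ergodic $\mu$. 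Thus
\begin{align*}
\limsup_{\epsilon\to 0}\frac{1}{\logf}\sup_{\mu\in E(X^{\mathbb Z},\sigma)}\underline h_\mu^{BK}(T,\epsilon)
=\limsup_{\epsilon\to 0}\frac{1}{\logf}\sup_{\mu\in E(X^{\mathbb Z},\sigma)}F(\mu,d^{\mathbb Z}),
\end{align*}
and the right-hand side is $\overline{\rm mdim}_M(\sigma,X^{\mathbb Z},d^{\mathbb Z})$ by the displayed variational principle in Lemma \ref{lem 3.7},(1).

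The one genuinely new input — and the step I expect to be the main obstacle — is showing that the product metric $d^{\mathbb Z}$ \emph{achieves} the minimum in the double variational principle, i.e.\ that $\overline{\rm mdim}_M(\sigma,X^{\mathbb Z},d^{\mathbb Z})=\mdim(\sigma,X^{\mathbb Z})$, for which I would invoke the Pontrjagin--Schnirelmann theorem in the form that the hypothesis ${\rm dim}_H(X,d)={\rm \overline{dim}}_B(X,d)$ forces $d$ to be an optimal metric on $X$, lift this to $X^{\mathbb Z}$ via the standard computation $\overline{\rm mdim}_M(\sigma,X^{\mathbb Z},d^{\mathbb Z})=\overline{\rm dim}_B(X,d)$ and $\mdim(\sigma,X^{\mathbb Z})=\dim X$ (so that the chosen metric is already minimizing), and then use Frostman's Lemma \ref{lem 3.2} applied on the factor $X$ to produce, for each small $\epsilon$, a Borel measure on $X$ whose product over $\mathbb Z$ has Brin--Katok lower $\epsilon$-entropy close to $\log$ of the relevant covering number — the point being to realize the upper bound by an actual \emph{invariant} (indeed product, hence ergodic) measure rather than just a Borel one. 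Concretely, for the subshift one builds $\nu_\epsilon\in M(X)$ as in Lemma \ref{lem 3.2}, takes $\mu_\epsilon=\nu_\epsilon^{\otimes\mathbb Z}\in E(X^{\mathbb Z},\sigma)$, and estimates $-\log\mu_\epsilon(B_n(x,\epsilon))$ from below using the Frostman bound on each coordinate block $[-N,N]$ with $N\sim\log(1/\epsilon)$; summing over the $n$ coordinates and dividing by $n$ gives, after letting $n\to\infty$, $\underline h_{\mu_\epsilon}^{BK}(T,C\epsilon)\ge c\cdot\dim_H(X,d,\epsilon')\log(1/\epsilon)\,(1+o(1))$ for suitable constants, and then $c\to 1$, $\epsilon\to 0$ recovers $\overline{\rm dim}_B(X,d)=\overline{\rm mdim}_M(\sigma,X^{\mathbb Z},d^{\mathbb Z})$. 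The case $\overline{\rm dim}_B(X,d)=\infty$ is then immediate since both sides are $+\infty$ (the lower-bound construction above still produces measures with arbitrarily large rate). The $\liminf$ statement follows verbatim by replacing every $\limsup_{\epsilon\to0}$ by $\liminf_{\epsilon\to0}$ throughout, using that Lemmas \ref{lem 2.1},(3), \ref{lem 3.7}, and the lower-bound construction are all insensitive to this change.
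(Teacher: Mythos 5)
There is a genuine gap in the middle of your argument: you claim that for ergodic $\mu$ one may conclude $\limsup_{\epsilon\to 0}\frac{1}{\logf}\underline h_\mu^{BK}(T,\epsilon)$ equals the divergent rate of the other elements of $\mathcal E$ "combined with the fact (from \cite{ycz25}) that $\underline h^{BK}_\mu$ may be added to the candidate set for ergodic $\mu$." That is not a fact — it is precisely the open question. What \cite{ycz25} establishes, and what is quoted in the paper as \eqref{equ 3.14}, is a variational principle with the supremum over \emph{all Borel probability measures} $M(X)$, not over $M(X,T)$ or $E(X,T)$. The paper explicitly remarks immediately after Theorem \ref{thm 3.6} that whether the analogous statement holds for arbitrary TDS over ergodic measures is still open. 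So your reduction via Lemma \ref{lem 2.1},(3) plus Lemma \ref{lem 3.7} is circular: it assumes the very thing the theorem is designed to prove (in a special case). Relatedly, the detour through the marker property, Lemma \ref{lem 3.7},(2), Theorem \ref{thm 1.3}, and minimality of the metric $d^{\mathbb Z}$ in the double variational principle plays no role in the actual argument; none of that machinery is needed or used.

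The correct route, which you do gesture at toward the end of your proposal, is a direct construction that bypasses the open question by exhibiting one ergodic measure with the desired lower Brin--Katok rate at each scale. The paper's proof proceeds as follows: cite \cite[Theorem 5]{vv17} to get $\overline{\rm mdim}_M(\sigma,X^{\mathbb Z},d^{\mathbb Z})=\overline{\rm dim}_B(X,d)$, which under the hypothesis equals $\dim_H(X,d)$. Fix $0<c<1$ and for small $\epsilon$ take $\mu\in M(X)$ from Lemma \ref{lem 3.2}; let $\nu=\mu^{\otimes\mathbb Z}\in E(X^{\mathbb Z},\sigma)$. Since $B_n(x,\epsilon/14)\subset\{y:y_j\in B_d(x_j,\epsilon/14),\ 0\le j<n\}$ (no coordinate block $[-N,N]$ is needed — only the $0$-th coordinate of each $\sigma^k x$ is used), the product structure plus the Frostman bound gives $\nu(B_n(x,\epsilon/14))\le(\epsilon/7)^{nc\,{\rm dim}_H(X,d,\epsilon)}$, hence $\underline h_\nu^{BK}(\sigma,\epsilon/14)\ge c\,{\rm dim}_H(X,d,\epsilon)\log(7/\epsilon)$. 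Taking $\limsup_{\epsilon\to 0}$ and then $c\to 1$ yields $\dim_H(X,d)\le\limsup_\epsilon\frac{1}{\logf}\sup_{\mu\in E}\underline h_\mu^{BK}$, and the reverse inequality follows since $\underline h_\mu^{BK}\le\overline h_\mu^{BK}\in\mathcal E$ closes the loop. Your instinct about the product Frostman measure is right, but the logical scaffold you built around it is both unnecessary and unsound.
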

	
	\begin{proof}
		By \cite[Theorem 5]{vv17}, we have ${\rm \overline{mdim}}_M(\sigma,X^{\mathbb{Z}},d^{\mathbb{Z}})={\rm \overline{dim}}_B(X,d)$. Now let $0<c<1$. Then, by Lemma  \ref{lem 3.2} there exists   exists $\epsilon_0 \in (0,1)$ such that   for any $0< \epsilon \leq \epsilon_0$, there exists $\mu \in M(X)$ such that for any $E \subset X$ with $\diam(E)<\frac{\epsilon}{6}$, we have
		$$\mu(E)\leq \diam(E)^{c\cdot {\rm dim}_H(X,d,\epsilon)}.$$
		
		Let $\nu:=\mu^{\otimes{\mathbb{Z}}}$ be the product measure on $X^{\mathbb{Z}}$. Then $\nu \in  E(X^{\mathbb{Z}},\sigma)$.  For any $x\in  X^{\mathbb{Z}}$, $\epsilon \in (0,\epsilon_0)$ and $n\in \mathbb{N}$,  we define
		$$I_n(x,\epsilon):=\{y\in X^{\mathbb{Z}}: y_j\in B_d(x_j,\epsilon)~~ \forall ~0\leq j<n\}.$$
		Then, for every $x\in X^{\mathbb{Z}}$ we have $B_n(x,\epsilon) \subset I_n(x,\epsilon)$, and hence
		$$\nu(B_n(x,\frac{\epsilon}{14}))\leq \Pi_{0\leq j<n}\mu(B_d(x_j,\frac{\epsilon}{14}))\leq (\frac{\epsilon}{7})^{nc \cdot {\rm dim}_H(X,d,\epsilon)}.$$
		This yields that
		$$ c \cdot {\rm dim}_H(X,d,\epsilon) \cdot \log \frac{7}{\epsilon}\leq \underline{h}_{\nu}^{BK}(\sigma, \frac{\epsilon}{14})\leq  \sup_{\mu \in  E(X^{\mathbb{Z}},\sigma)}\underline{h}_{\mu}^{BK}(\sigma, \frac{\epsilon}{14}).$$
		Taking the upper limits on both sides of the above inequality and letting $c \to 1$, we obtain
		\begin{align*}
			{\rm dim}_H(X,d)&\leq \limsup_{\epsilon \to 0}\frac{1}{\logf}\sup_{\mu \in  E(X^{\mathbb{Z}},\sigma)}\underline{h}_{\nu}^{BK}(\sigma, \epsilon)\\
			&\leq \limsup_{\epsilon \to 0}\frac{1}{\logf}\sup_{\mu \in  E(X^{\mathbb{Z}},\sigma)}\overline{h}_{\mu}^{BK}(\sigma, \epsilon)\\
			&={\rm \overline{mdim}}_M(\sigma,X^{\mathbb{Z}},d^{\mathbb{Z}})={\rm \overline{dim}}_B(X,d).
		\end{align*}
		This completes the proof.
	\end{proof}
	
	We remark that it is still  an  open question \cite{shi,ycz25} whether Theorem  \ref{thm 3.6}  holds for any TDS  in terms of ergodic measures. 
	
	\subsubsection{The  unification problem of variational principles for metric mean dimensions}
	For any  TDS $(X,d,T)$, it always holds that for  any $h_{\mu}(T,\epsilon)\in \mathcal{E}$,
	\begin{align*}
		\over&=\limsup_{\epsilon \to 0}\frac{1}{\logf}\sup_{\mu \in E(X,T)}h_{\mu}(T,\epsilon)\\
		&=\limsup_{\epsilon \to 0}\frac{1}{\logf}\sup_{\mu \in M(X,T)}h_{\mu}(T,\epsilon).
	\end{align*}
	
	Fix a measure-theoretic $\epsilon$-entropy $h_{\mu}(T,\epsilon)\in \mathcal{E}$. Another fascinating question,  which  has been  mentioned several times in  existing references \cite{vv17,lt18,cpv24,ycz25}, is  exchanging the  order of  $\limsup_{\epsilon \to 0}$ and $\sup_{\mu \in M(X,T)}$(or $\sup_{\mu \in E(X,T)}$) in the above variational principles for metric mean dimension.  Unfortunately, for $L^p,L^{\infty}$    rate-distortion functions $h_{\mu}(T,\epsilon)\in \{R_{\mu,L^{p}}(\epsilon), R_{\mu,L^{\infty}}(\epsilon)\}$  Lindenstrauss and Tsukamoto \cite[Section VIII]{lt18} posed an example to show the strict inequality
	\begin{align}\label{inequ 3.11}
		\sup_{\mu \in M(X,T)}\{\limsup_{\epsilon \to 0}\frac{1}{\logf}h_{\mu}(T,\epsilon)\}<\over
	\end{align}
	is possible.
	Therefore, it follows from this example that
	\begin{itemize}
		\item [(1)]  by Lemma \ref{lem 2.1} (3), the order of $\limsup_{\epsilon \to 0}$ and $\sup_{\mu \in E(X,T)}$ can't be exchanged for  other measure-theoretic $\epsilon$-entropy $h_{\mu}(T,\epsilon)\in \mathcal{E}\backslash\{R_{\mu,L^{\infty}}\}$;
		\item [(2)]  for some infinite entropy systems, no maximal metric mean dimension measure exists\footnote[8]{Given $h_{\mu}(T,\epsilon)\in \mathcal{E}$, we say that $\mu \in M(X,T)$ is  a maximal  metric mean dimension  measure \cite{ycz23} if $\mu$ satisfies $\limsup_{\epsilon \to 0}\frac{1}{\logf}h_{\mu}(T,\epsilon)=\over$.},  and every maximal entropy measure for topological entropy\footnote[9]{An invariant measure $\mu \in M(X,T)$ is  called a maximal entropy measure if $h_{top}(T,X)=h_{\mu}(T)$ (cf. \cite[\S 8.3, p.191]{w82}).} is not maximal for  metric mean dimension;
		\item [(3)] the equality for (\ref{inequ 3.11}) can only be expected for certain dynamical systems (e.g., full shifts over  finite-dimensional cubes, and the conservative homeomorphisms \cite{lr24}.). 
	\end{itemize}
	
	Additionally, (aperiodic) systems with the marker property also  offer the possibility of equality in \eqref{inequ 3.11}.

	\begin{thm}\label{thm 2.4}
		Let $(X,T)$ be a TDS admitting the marker property. If $\mdim(T,X)<\infty$, then there exists a metric $d\in \mathcal{D}^{'}(X)$ such that for every $h_{\mu}(T,\epsilon)\in \mathcal{E}$, 
		\begin{align*}
			\mdim(T,X)&={\rm mdim}_{M}(T,X,d)\\
			&= \sup_{\mu \in E(X,T)}\{ \limsup_{\epsilon \to 0}\frac{1}{\logf} h_{\mu}(T,\epsilon)\}\\
			&= \sup_{\mu \in M(X,T)}\{ \limsup_{\epsilon \to 0}\frac{1}{\logf} h_{\mu}(T,\epsilon)\}.
		\end{align*}
	\end{thm}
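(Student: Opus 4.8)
The plan is to combine Theorem~\ref{thm 1.3} with the special metric produced by the marker property. Recall that \cite[Theorem 3.12]{lt19} guarantees, for a system with the marker property, the existence of a compatible metric $d_0\in \mathcal{D}(X)$ realizing Lindenstrauss--Weiss's equality $\mdim(T,X)=\overline{\rm mdim}_M(T,X,d_0)$. Since $\mdim(T,X)<\infty$ by hypothesis, this metric lies in $\mathcal{D}^{'}(X)$, so $\mathcal{D}^{'}(X)\neq\emptyset$ and moreover contains a metric at which the upper metric mean dimension attains the value $\mdim(T,X)$. I would first record that $d_0$ in fact achieves the minimum in Theorem~\ref{thm 1.3}: the general lower bound $\mdim(T,X)=\min_{d\in\mathcal{D}^{'}(X)}\overline{\rm mdim}_M(T,X,d)$ established there (via (\ref{equ 2.4})) together with $\overline{\rm mdim}_M(T,X,d_0)=\mdim(T,X)$ forces the minimum to be attained exactly at $d_0$ (and any other such metric).

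Next I would pin down the value of the double supremum at the metric $d_0$. By Theorem~\ref{thm 1.3}, for every $h_\mu(T,\epsilon)\in\mathcal{E}$ one has
\begin{align*}
\mdim(T,X)&=\min_{d\in\mathcal{D}^{'}(X)}\sup_{\mu\in E(X,T)}\Big\{\limsup_{\epsilon\to 0}\frac{1}{\logf}h_\mu(T,\epsilon)\Big\}\\
&=\min_{d\in\mathcal{D}^{'}(X)}\sup_{\mu\in M(X,T)}\Big\{\limsup_{\epsilon\to 0}\frac{1}{\logf}h_\mu(T,\epsilon)\Big\}.
\end{align*}
On the other hand, for the specific metric $d_0$ Lemma~\ref{lem 2.1},(3) (applied with the ordinary $\limsup$, not the supremum-then-limsup order) gives the upper bound $\sup_{\mu\in M(X,T)}\{\limsup_{\epsilon\to0}\frac{1}{\logf}h_\mu(T,\epsilon)\}\le \limsup_{\epsilon\to0}\frac{1}{\logf}\sup_{\mu\in M(X,T)}h_\mu(T,\epsilon)=\overline{\rm mdim}_M(T,X,d_0)=\mdim(T,X)$, using $\overline{\rm mdim}_M(T,X,d_0)=\mdim(T,X)$ again. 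Since $d_0\in\mathcal{D}^{'}(X)$ is a candidate in the minimum appearing in Theorem~\ref{thm 1.3}, and the minimum equals $\mdim(T,X)$, the value at $d_0$ cannot be smaller than $\mdim(T,X)$; hence it equals $\mdim(T,X)$, and the same sandwich holds with $E(X,T)$ in place of $M(X,T)$ because $E(X,T)\subset M(X,T)$ and the ergodic supremum is $\ge\mdim(T,X)$ by the first displayed equality above.

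Finally I would package this: take $d=d_0$. Then $\overline{\rm mdim}_M(T,X,d)=\underline{\rm mdim}_M(T,X,d)=\mdim(T,X)$ (the lower metric mean dimension is squeezed between $\mdim(T,X)$, which is always a lower bound for $\underline{\rm mdim}_M$ via the inequality $\mdim(T,X)\le\underline{\rm mdim}_M(T,X,d)$ recalled after (\ref{equ 2.4})-type considerations, wait—actually one only has $\mdim\le\overline{\rm mdim}_M$ in general; here I would instead simply invoke that $\overline{\rm mdim}_M(T,X,d_0)=\mdim(T,X)$ and note $\underline{\rm mdim}_M\le\overline{\rm mdim}_M$, while the chain of inequalities through the ergodic double supremum forces $\underline{\rm mdim}_M(T,X,d_0)\ge\mdim(T,X)$ as well, so ${\rm mdim}_M(T,X,d_0)$ exists and equals $\mdim(T,X)$), and the two suprema over $E(X,T)$ and $M(X,T)$ both equal $\mdim(T,X)$ by the previous paragraph.

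\medskip
The main obstacle I anticipate is purely bookkeeping rather than conceptual: one must be careful about the two different orders of $\limsup_{\epsilon\to0}$ and $\sup_\mu$. Theorem~\ref{thm 1.3} is phrased with the supremum \emph{inside}, which is the hard (small) side, while the metric-mean-dimension variational principle of Lemma~\ref{lem 2.1},(3) has the supremum \emph{outside}. The delicate point is to check that at the single metric $d_0$ the inside-supremum quantity does not drop strictly below $\mdim(T,X)$; this is exactly guaranteed by the fact that the \emph{minimum} over $\mathcal{D}^{'}(X)$ in Theorem~\ref{thm 1.3} equals $\mdim(T,X)$, so no metric — in particular not $d_0$ — can make it smaller. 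Once that is spelled out, the rest is a sandwich argument. A secondary minor point is to confirm $d_0$ may be taken in $\mathcal{D}^{'}(X)$ and, if one wants ${\rm mdim}_M(T,X,d)$ to exist, that $\underline{\rm mdim}_M(T,X,d_0)=\overline{\rm mdim}_M(T,X,d_0)$; this follows since the ergodic double supremum at $d_0$ is $\ge\mdim(T,X)$ and is also $\le\underline{\rm mdim}_M(T,X,d_0)$ by the lower-limit version of Lemma~\ref{lem 2.1},(3) applied with $d_0$.
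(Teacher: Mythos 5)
Your overall strategy is correct and is essentially the argument the paper intends (the paper states Theorem~\ref{thm 2.4} without a separate proof, treating it as an immediate consequence of Theorem~\ref{thm 1.3}, equation~(\ref{equ 2.4}), and Lemma~\ref{lem 2.1},(3)): take the metric $d_0$ from (\ref{equ 2.4}), note $d_0\in\mathcal{D}'(X)$ because $\mdim(T,X)<\infty$, bound the ergodic double supremum at $d_0$ from below by $\mdim(T,X)$ via the minimum in Theorem~\ref{thm 1.3}, and from above by $\overline{\rm mdim}_M(T,X,d_0)=\mdim(T,X)$ via the exchange $\sup_\mu\limsup_\epsilon\le\limsup_\epsilon\sup_\mu$ and Lemma~\ref{lem 2.1},(3).

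Where you go wrong is in the parenthetical at the end, about showing $\underline{\rm mdim}_M(T,X,d_0)=\overline{\rm mdim}_M(T,X,d_0)$ so that ${\rm mdim}_M(T,X,d_0)$ exists. Your first instinct was right: the Lindenstrauss--Weiss inequality $\mdim(T,X)\le\underline{\rm mdim}_M(T,X,d)$ holds for every compatible metric $d$, not just with the upper metric mean dimension, and this immediately squeezes $\underline{\rm mdim}_M(T,X,d_0)=\overline{\rm mdim}_M(T,X,d_0)=\mdim(T,X)$. The ``alternative'' you substitute after second-guessing yourself is not valid: you claim that the ergodic double supremum $\sup_{\mu}\{\limsup_{\epsilon\to 0}\frac{1}{\logf}h_\mu(T,\epsilon)\}$ is $\le\underline{\rm mdim}_M(T,X,d_0)$ ``by the lower-limit version of Lemma~\ref{lem 2.1},(3)''; but that lower-limit version only gives $\underline{\rm mdim}_M(T,X,d_0)=\liminf_{\epsilon\to0}\frac{1}{\logf}\sup_\mu h_\mu(T,\epsilon)$, and the inequality $\sup_\mu\limsup_\epsilon\le\liminf_\epsilon\sup_\mu$ is false in general (one only has $\sup_\mu\liminf_\epsilon\le\liminf_\epsilon\sup_\mu$). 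So delete that detour and keep your original line: $\mdim\le\underline{\rm mdim}_M\le\overline{\rm mdim}_M=\mdim$ at $d_0$. Alternatively, one can note that the metric produced by \cite[Theorem 3.12]{lt19} already has both upper and lower metric mean dimensions equal to $\mdim(T,X)$. With that repaired, your proof is correct and follows the paper's route.
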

	
	Under the conditions of Theorem \ref{thm 2.4}, using Lemma \ref{lem 2.1} (3)  these   ergodic variational principles for metric mean dimension reduce to the following unified form:
	\begin{align}\label{inequ 3.13}
		\over=\sup_{\mu \in E(X,T)}\{\limsup_{\epsilon \to 0}\frac{1}{\logf}\inf_{\diam  (\alpha) \leq \epsilon,\atop \alpha \in \mathcal{P}_{X}}\limits h_\mu(T,\alpha)\}.
	\end{align}

	As an ongoing topic on  linking the  ergodic theory  and topological dynamics of infinite entropy systems,  a  proper measure-theoretic metric mean dimension of invariant measures  are supposed to be defined such that  (\ref{inequ 3.13}) holds for all TDSs.  
	
	In the context of the action of amenable groups, using  the amenable measure-theoretic $\epsilon$-entropies,  we  define \emph{a new measure-theoretic metric mean dimension} that does not depend on the choice of the amenable measure-theoretic $\epsilon$-entropies. This allows us to    realize (\ref{inequ 3.13}) by establishing  the variational principles for the amenable metric mean dimension, without imposing the marker property on dynamical systems. This will be done  in a separate work \cite{y25}.

\section*{Acknowledgment}

This work was  completed  at  the Mathematical Center of Chongqing University, which was supported by  the China Postdoctoral Science Foundation (No. 2024M763856) and  the Postdoctoral Fellowship Program of CPSF  (No. GZC20252040).   I would like to thank Prof.\ Hanfeng Li for carefully reading the manuscript and providing many valuable comments and suggestions, which have significantly improved the early draft.
I am also grateful to the editor Prof.\ Tobias Koch and the anonymous referees for their abundant insightful comments and constructive suggestions that greatly improved the quality of this paper.

\ifCLASSOPTIONcaptionsoff
  \newpage
\fi

\begin{IEEEbiographynophoto}{Rui Yang}   received the Ph.D. degree in mathematics from Nanjing
Normal University, China, in 2024. He is a Specially Appointed Associate Professor with the School of Mathematics, Northwest University (Xi'an). His research interests include topological dynamics, ergodic theory, and the applications of the theory of entropy and mean dimension in  information theory and  control systems.
\end{IEEEbiographynophoto}

\end{document}